\newcommand{\1}{\mathds{1}}
\newcommand{\er}{\mathbb R}
\newcommand{\nat}{\mathbb N}
\newcommand{\hyp}{({\it H})\xspace}
\newcommand{\hypi}{{\it H-(i)}\xspace}
\newcommand{\hypii}{{\it H-(ii)}\xspace}
\newcommand{\hypiii}{{\it H-(iii)}\xspace}
\newtheorem{theorem}{Theorem}[section]
\newtheorem{corollary}[theorem]{Corollary}
\newtheorem{lemma}[theorem]{Lemma}
\newtheorem{proposition}[theorem]{Proposition}
\newtheorem{remark}[theorem]{Remark}
\newcommand{\Cc}{\mathcal C}
\newcommand{\Dd}{\mathcal D}
\newcommand{\Ee}{\mathcal E}
\newcommand{\Pp}{\mathcal P}
\newcommand{\Tt}{\mathcal T}
\newcommand{\DD}{\mathbb D}
\newcommand{\EE}{\mathbb E}
\newcommand{\QQ}{\mathbb Q}
\newcommand{\NN}{\mathbb N}
\newcommand{\PP}{\mathbb P}
\newcommand{\dist}{{\mbox{dist}}}
\newcommand{\nd}{n_{\mathcal{D}}}
\newcommand{\UUU}{{U}}
\newcommand{\trunc}{{\beta}}
\newcommand{\weight}{{\omega}}
\newcommand{\minMaxwell}{{\underline{m}}}
\newcommand{\maxMaxwell}{{\overline{M}}}
\newcommand{\dens}{h}
\numberwithin{equation}{section}
\title{Particle approximation for Lagrangian Stochastic Models with specular boundary condition}
\author[1]{Mireille Bossy\thanks{mireille.bossy@inria.fr}}
\author[2]{Jean-Fran\c{c}ois Jabir \thanks{jean-francois.jabir@uv.cl. The second author acknowledges the FONDECYT Iniciaci\'on en Investigaci\'on Project N\textordmasculine 11130705 and The Iniciativa Científica Milenio N\textordmasculine 130062 for its support.}}
\affil[1]{TOSCA Laboratory, INRIA Sophia Antipolis -- M\'editerran\'ee, France}
\affil[2]{CIMFAV, Facultad de Ingenier\'ia, Universidad de Valpara\'iso, Chile}
\date{\today}
\begin{document}
\maketitle
\begin{abstract}
In this paper, we prove a particle approximation, in the sense of the propagation of chaos, of a Lagrangian stochastic model submitted to specular boundary condition and satisfying the mean no-permeability condition.
\end{abstract}

\noindent
\textbf{Key words:} Lagrangian stochastic model; stochastic particle systems; propagation of chaos. \\ \smallskip
\textbf{AMS 2010 Subject classification:} 60H10, 34B15, 35Q83, 35Q84.

\section{Introduction}

In this paper, we construct a particle approximation of the following Lagrangian stochastic model $(X,U)$ on a finite time interval $[0,T]$,  submitted to specular reflections at the boundary of a compact smooth  domain $\Dd$ of $\er^d$:
\begin{equation}
\label{eq:ConfinedLagrangsystem}
\left\{\begin{aligned}
&X_t=X_0+\int_0^t \UUU_s ds,\quad
\UUU_t=\UUU_0+\int_0^t B[X_s;\rho(s)]ds + \sigma W_t+K_t,\\
&K_t=-2\sum_{0<s\leq t}\left(\UUU_{s^-}\cdot n_{\Dd}(X_s)\right)n_{\Dd}(X_s)\1_{\{X_s\in\partial\Dd\}},\\
&\rho(t)\,\mbox{is the Lebesgue density of the law of }(X_t,\UUU_t) \mbox{ for } t\in (0,T].
\end{aligned}\right.
\end{equation}
The initial condition $(X_0,\UUU_0)$ is distributed according to a given probability measure $\mu_0$, and is  independent to the $\er^d$-Brownian motion $(W_t;t\in[0,T])$, $\nd$ is the outward normal unit vector of the smooth boundary $\partial\Dd$.
We are considering Lagrangian stochastic model, this means that the dependencies in $x$ of the coefficients in the velocity equation \eqref{eq:ConfinedLagrangsystem} are expressed as a conditional expectation with respect to the event $\{X_t=x\}$. Here  the drift component $B[x;\rho(t)]$ is a version of the conditional expectation $\EE\left[b(\UUU_t)\,|\,X_t=x\right]$. Thus given a kernel $b$,  $B$ is defined for $(x,\gamma) \in \Dd\times L^1(\Dd\times\er^d)$ as
\begin{equation}\label{eq:NonlinearDrift}
B[x;\gamma]=\left\{
\begin{aligned}
&\frac{\int_{\er^d}b(v)\gamma(x,v)dv}{\int_{\er^d}\gamma(x,v)dv}\,\mbox{ if}\,\int_{\er^d}\gamma(x,v)dv\neq 0,\\
&0\,\mbox{otherwise.}
\end{aligned}
\right.
\end{equation}
A particle approximation of Lagrangian Stochastic Models (SLM) like \eqref{eq:ConfinedLagrangsystem} has been studied without the confining jump term $(K_t; t\in[0,T])$ in \cite{BoJaTa-10}, and in the particular one-dimensional confinement case in \cite{BoJa-11}. The existence and uniqueness (in the weak sense) of a solution to \eqref{eq:ConfinedLagrangsystem} has been established in \cite{BoJa-15} for confined system in a smooth and compact domain $\Dd$, with the help of PDE techniques. Furthermore the unique solution $(X,\UUU)$ satisfies the boundary condition \eqref{eq:MeanNoPermeability}, and the  sequence of jump-times
\[
\tau_{n}=\inf\{\tau_{n-1}<t\leq T; X_t\in\partial\Dd\}\,\mbox{ for }\,n\geq 1,
 \quad\tau_{0}=0, 
\]
 is well-defined and strictly increasing with $n$ up to $T$, with the convention that $\inf \emptyset = T$.

Numerical algorithms for SLM are based on particle approximation methods
(see e.g. \cite{jabir-10b} and the references therein). Here we give a first convergence result of a particle approximation of \eqref{eq:ConfinedLagrangsystem}.
We study  the limit behavior of the interacting particle system $\{(X^{i,\epsilon,N},\UUU^{i,\epsilon,N},K^{i,\epsilon,N}),  i=1,\ldots,N\}$,
on a given probability space $(\Omega, \mathcal{F},
(\mathcal{F}_t;t\geq 0), \QQ)$ endowed  with independent copies  $\{(X^{i}_{0},\UUU^{i}_{0},(W^{i}_t;\,t\in[0,T])), i=1,\ldots,N\}$  of $(X_{0},\UUU_{0},(W_t;\,t\in[0,T]))$, defined as the solution to the following SDE system:
\begin{equation}\label{eq:Confined_Smoothed_Particle_System}
\left\{
\begin{aligned}
&X^{i,\epsilon,N}_{t} = X^{i}_{0} + \int_{0}^{t} \UUU^{i,\epsilon,N}_{s} ds,\\
&\UUU^{i,\epsilon,N}_{t} = \UUU^{i}_{0}+\int_{0}^{t}
B_\epsilon  [X^{i}_s ;\mu^{\epsilon,N}_s]ds
+ \sigma W^{i}_{t}+ K^{i,\epsilon,N}_{t},\\
&K^{i,\epsilon,N}_{t}=-2\sum_{\substack{0< s\leq t}}
\left(\UUU^{i,\epsilon,N}_{s^{-}}\cdot {n_{\Dd}(X^{i,\epsilon,N}_s)}\right)
{n_{\Dd}(X^{i,\epsilon,N}_s)}
{\1}_{\displaystyle \left\{X^{i,\epsilon,N}_{s}\in\partial\Dd\right\}},~i=1,\ldots ,N.
\end{aligned}
\right.
\end{equation}
where $\mu^{\epsilon,N}_t = \frac{1}{N}\sum_{i=1}^N\delta_{(X^{i,\epsilon,N}_t,\UUU^{i,\epsilon,N}_t)}$ is the marginal according to the two first canonical coordinates $(x(t), u(t))$  at a given time $t$ of the empirical measure $\frac{1}{N}\sum_{i=1}^N\delta_{
(X^{i,\epsilon,N},\UUU^{i,\epsilon,N},K^{i,\epsilon,N})}$ of the $N$-particles system. The drift $B_\epsilon[x;\gamma]$ is a smoothed version of $B[x;\gamma]$ in \eqref{eq:NonlinearDrift},  with the help of a family of mollifiers
$\phi_{\epsilon}(x):=\epsilon^{-d}\phi(\frac{x}{\epsilon})$, for some
$\phi\in\Cc^{1}_{c}(\Dd)$ such that  $\phi\geq 0$ and $\int_\Dd \phi_\epsilon(x)\,dx=1$. $B_\epsilon[x;\gamma]$  is defined for all $x\in \Dd$ and all $\gamma$ in the set of probability measures on $\overline{\Dd}\times\er^d$ as
\begin{equation}\label{eq:SmoothedNonlinearDrift}
(x,\gamma) \mapsto B_\epsilon[x;\gamma]=\frac{\int_{\Dd\times\er^d}b(v)\trunc_\epsilon(y)\phi_{\epsilon}(x-y)\gamma(dy,dv)}{\int_{\Dd\times\er^d}\trunc_\epsilon(y)\phi_{\epsilon}(x-y)
\gamma(dy,dv)+\epsilon}
\end{equation}
where
$\trunc_\epsilon(y)  = \1_{\{\dist(y,\partial \Dd) > \epsilon\}}$ cutoffs the support of $\gamma$ from a distance $\epsilon$ to $\partial \Dd$.

The existence and uniqueness in law for the solution of \eqref{eq:Confined_Smoothed_Particle_System} simply follow from Girsanov's transformation and from the wellposedness of the confined Langevin process (i.e. the case $b=0$, see Theorem 2.1 in  \cite{BoJa-15}). This step  only requires that $\Dd$ has a $C^3$-boundary and that the support of $\mu_0$ is included in $\Dd\times\er^d$.

Our main result is stated in Theorem \ref{thm:MainTheorem}: as the number of particle grows to infinity and the  mollifiers parameter $\epsilon$ goes to $0$, we prove that the particles
\eqref{eq:Confined_Smoothed_Particle_System} propagate the initial chaos with a limit law given by the  solution to \eqref{eq:ConfinedLagrangsystem}.

\begin{remark}[About the mean no-permeability condition on $\partial \Dd$]
In \cite{BoJa-15}, we prove that the solution to \eqref{eq:ConfinedLagrangsystem}  satisfies the so-called mean no-permeability condition: for $x\in \partial \Dd$,
\begin{equation}\label{eq:MeanNoPermeability}
\EE\left[(\UUU_t\cdot n_{\Dd}(X_t))|X_t=x\right]=0.
\end{equation}
Stochastic Lagrangian models have been introduced for complex simulation in Computational Fluid Dynamic (CFD).
The mean-Dirichlet boundary condition \eqref{eq:MeanNoPermeability} grounds the stochastic particle algorithm used to downscale simulations in CFD applications  (we refer to \cite{BoJaTa-10}\cite{jabir-10b} and the references therein for further details).

Notice that the particle approximation of \eqref{eq:MeanNoPermeability} (with the kernel $b(u,x) = u\cdot \nd(x)$) in a neighborhood of $\partial \Dd$ is still an issue, that  seems to require  the continuity of the  density of $(X_t,U_t)$ over $\overline{\Dd}$. Except in the  one-dimensional  case studied in \cite{BoJa-11},  and, to the best of our knowledge, such regularity result  is  unknown in the PDE literature on trace problems.
\end{remark}

\begin{remark}[About the sequence of passage times on $\partial \Dd$]
When $\Dd=(0,+\infty)\times\er^{d-1}$, $\sigma=1$ and $b=0$, the explicit expression of the joint law of $(\tau_n,\UUU_{\tau_n}, n\geq 1)$  enables to control uniformly the confinement process $(K^{i,\epsilon,N}_t;\,t\in [0,T])$.   For more general domain, we compensate the lack of such control by studying the trace problem for the density $\rho$.

Notice  that the estimate $(3.6)$ in \cite{BoJa-11} on the upper-bound of $\PP(\tau_n\leq T)$ contains a mistake, claiming that this probability decreases with $n$ uniformly in $T$. This shall be reformulated as follows: when the initial law $\mu_0$ has its support in $ ((0,+\infty)\times\er^{d-1})\times \er^d$, there exists a constant $C(T,m_0,\beta^*)$ depending on $T$, $m_0:=\int \sqrt{v}\mu_0(dy,dv)$ and the distance  $\beta^* = \sup \{\beta >0;  \text{supp}(\mu_0)  \subset([\beta,+\infty)\times\er^{d-1})\times\er^d \}$ of the  support of $\mu_0$ to $(\{0\}\times\er^{d-1})\times \er^d$, such that, for all $n\geq 5$,
\begin{equation}\label{eq:Correction}
\PP\left(\tau_n\leq T\right)\leq {C(T,m_0,}\beta^*)\dfrac{1}{2^n}.
\end{equation}
This clarification of the constant in front of ${1}/{2^n}$ in the  right hand side  does not impact the results in  \cite{BoJa-11}, as we worked with fixed $T$. For completeness we give a short  proof of \eqref{eq:Correction} in Appendix \ref{sec:A2}.
\end{remark}

\paragraph{Notation.}
$\mathcal{E}$ denotes the set of paths $\Cc([0,T];\overline{\Dd})\times\DD([0,T];\er^d)\times\DD([0,T];\er^d)$.
 For all $t\in(0,T]$, we introduce the following sets: $Q_{t}:=(0,t)\times\Dd\times\er^{d}$,
\begin{align*}
&\Sigma^{+}_{T}:=\left\{(t,x,u)\in(0,T)\times\partial\Dd\times\er^{d}; (u\cdot \nd(x))>0\right\},\\
&\Sigma^{-}_{T}:=\left\{(t,x,u)\in(0,T)\times\partial\Dd\times\er^{d};(u\cdot \nd(x))<0\right\},\\
&\Sigma^{0}_T:=\left\{(t,x,u)\in(0,T)\times\partial\Dd\times\er^{d};(u\cdot \nd(x))=0\right\}.
\end{align*}
We set $|\Dd|:= \int_\Dd dx$. Denoting by $d\sigma_{\partial \Dd}$ the surface measure on $\partial\Dd$,  the product measure on $\Sigma_{T}:=\Sigma^+_T\cup\Sigma^0_T\cup\Sigma^-_T$ is  $d\lambda_{\Sigma_{T}}:=dt\otimes d\sigma_{\partial \Dd}(x)\otimes du$.
For a given positive weight function $\weight$ on $\er^d$, we define
the following weighted Sobolev spaces
\begin{align*}
&L^2(\weight;Q_T):=\{f:Q_T\rightarrow \er; \Vert f\Vert_{L^2(\weight;Q_T)}^2:={\int_{Q_T} \weight(u)f^2(t,x,u)\,dt\,dx\,du} <+\infty\},
\\
&V_1(\weight;Q_T):=\{f\in\Cc([0,T];L^2(\weight;\Dd\times\er^d));\\
&\hspace{4cm}\Vert f\Vert_{V_1(\weight;Q_T)}:=\max_{t\in[0,T]}\Vert f (t)\Vert_{L^2(\weight;\Dd\times\er^d)}+\Vert \nabla_u f\Vert_{L^{2}(\weight;Q_T)}<+\infty\},
\\
&L^{2}(\weight;\Sigma^{\pm}_{T}):=\big\{f:\Sigma^{\pm}_{T}\rightarrow \er;
\Vert f\Vert_{L^2(\weight;\Sigma^{\pm}_T)}^2:=\int_{\Sigma^{\pm}_T} \weight(u) |(u\cdot \nd(x))| f^2(t,x,u) d\lambda_{\Sigma_T}<+\infty \big\},
\\
&L^{2}(\weight;\Dd\times\er^d):=\big\{f:\Dd\times\er^d\rightarrow \er;
\Vert f\Vert^2_{L^2(\weight;\Dd\times\er^d)}:=\int_{\Dd\times\er^d} \weight(u) f^2(x,u) dx\,du<+\infty \big\}.
\end{align*}
$\mathcal{M}(E)$ denotes the set of probability measures on a measurable space $E$.  When this is not ambiguous, we will use $\|f\|_p$ for $\|f\|_{L^p(E)}$ with $1\leq p \leq +\infty$.
\section{Main results}

\paragraph{Hypotheses. }From now on, we assume that the domain $\Dd$, the distribution $\mu_{0}$ of $(X_{0},\UUU_{0})$,
and the kernel $b$ in~\eqref{eq:ConfinedLagrangsystem} satisfy the following hypotheses \hyp.
\begin{description}
\item[\hypi] ${\partial \Dd}$ is a compact $\Cc^3$ sub-manifold of $\er^d$. The initial measure $\mu_0$ has support in the interior of $\Dd\times\er^d$ and $\int_{\Dd\times\er^{d}}|u|^{2}\mu_{0}(dx,du)<+\infty$.  $\mu_{0}$ has a density $\rho_{0}$ in the weighted  space $L^{2}(\weight;\Dd\times\er^{d})$ with
\[
\weight(u)=(1+|u|^2)^{\frac{\alpha}{2}},\quad\mbox{ for  some }\alpha>{(d+3)}.
\]
\item[\hypii] $b:\er^{d}\longrightarrow \er^{d}$ is a bounded {continuous} function and $\sigma>0$.

\item[\hypiii] There exist $\underline{P}_{0}$, $\overline{P}_{0}:\er^{+}\longrightarrow \er^{+}$  in $L^1(\er^+)$ such that $u\mapsto \sqrt{(1+|u|)}\overline{P}_{0}(|u|) + \sqrt{\overline{P}_0(|u|)}\in L^2(\weight; \er^d)$, and
\begin{align*}
&0 < {\displaystyle \underline{P}_{0}(|u|)\leq\rho_{0}(x,u)\leq \overline{P}_{0}(|u|),~\mbox{a.e. on}~\Dd\times\er^{d}}.
\end{align*}
\end{description}
Notice that \hyp are slightly more restrictive than the hypotheses in \cite{BoJa-15} for the existence: here  $b$ is assumed continuous  to simplify some weak convergence arguments, and the weight function $\weight$ is  chosen in order to control $\int_{\er^d} |u|^3/{\weight(u)} du$.

\begin{theorem}\label{thm:MainTheorem}
Assume \hyp. Let $\PP$ be the law on $\mathcal{E}$ of $(X,U,K)$ defined in \eqref{eq:ConfinedLagrangsystem},  and let $\PP^{\epsilon,N}$ be the law of
$\{(X^{i,\epsilon,N},\UUU^{i,\epsilon,N},K^{i,\epsilon,N}),\,1\leq i\leq N\}$ defined in \eqref{eq:Confined_Smoothed_Particle_System}. Then $\PP^{\epsilon,N}$ is $\PP$-chaotic; namely,   for all $\{F_l,1\leq l\leq k\}$, $k\geq 2$, with $F_i\in \Cc_{b}(\Cc([0,T];\overline{\Dd})\times \DD([0,T];\er^d)\times\DD([0,T];\er^d))$, it holds that
\begin{equation*}
\lim_{\epsilon\rightarrow 0^+}\lim_{N\rightarrow +\infty}\langle F_1\otimes F_2\otimes \cdots \otimes F_k\otimes 1 \otimes 1 \otimes \cdots \otimes 1,\PP^{\epsilon,N}\rangle = { \prod_{l=1}^{k}}
\langle F_l,\PP\rangle.
\end{equation*}
\end{theorem}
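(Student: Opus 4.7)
The plan is the classical Sznitman scheme: establish uniform-in-$(\epsilon,N)$ estimates on the particle system \eqref{eq:Confined_Smoothed_Particle_System}, prove tightness of the laws $\PP^{\epsilon,N}$, identify every weak limit point with the law $\PP$ of the nonlinear process \eqref{eq:ConfinedLagrangsystem}, and invoke the wellposedness result of \cite{BoJa-15} to conclude. By the standard equivalence between $\PP$-chaoticity and convergence of the symmetric empirical measure $\bar\mu^{\epsilon,N}$ to the Dirac mass $\delta_\PP$ on $\mathcal{M}(\mathcal{E})$, this yields the asserted product-form limit.

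For the first step I would derive a priori bounds uniform in $(\epsilon,N)$. Since $b$ is bounded and specular reflections preserve $|u|$, a Gronwall and Burkholder--Davis--Gundy argument applied to $|\UUU^{i,\epsilon,N}_t|^p$ provides moment bounds compatible with the weight $\weight$. A Girsanov change of measure that removes the bounded drift $B_\epsilon$ reduces the control of $K^{i,\epsilon,N}$ and of the expected number of boundary hits on $[0,T]$ to the confined Langevin case $b=0$ already handled in \cite{BoJa-15}, so one obtains a uniform bound on $\EE[|K^{i,\epsilon,N}_T|]$ and on $\EE[\#\{n\geq 1;\tau_n^{i,\epsilon,N}\leq T\}]$. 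Combined with Aldous' criterion on $\DD([0,T];\er^d)$ and the continuity of the position component, this gives tightness of $\PP^{\epsilon,N}$ under the exchangeable representation, hence tightness of the laws of $\bar\mu^{\epsilon,N}$ on $\mathcal{M}(\mathcal{E})$.

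The core step is the identification of limit points. Fixing a tagged particle and writing the associated empirical martingale problem, one has to pass to the limit in the nonlinear drift $B_\epsilon[X^{i,\epsilon,N}_s;\bar\mu^{\epsilon,N}_s]$ by first sending $N\to\infty$ at fixed $\epsilon$ (using weak convergence of $\bar\mu^{\epsilon,N}_s$ together with the fact that $(y,v)\mapsto b(v)\phi_\epsilon(x-y)\trunc_\epsilon(y)$ is bounded continuous) and then sending $\epsilon\to 0$. For the second limit one exploits that the limit density $\rho$ lies in $V_1(\weight;Q_T)$ with a strictly positive lower bound on $\Dd\times\er^d$ inherited from \hypiii and propagated through the associated linear Fokker--Planck equation with bounded drift, so that the denominator in \eqref{eq:NonlinearDrift} does not degenerate in the interior. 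Near $\partial\Dd$, the cutoff $\trunc_\epsilon$ and the trace estimates in $L^2(\weight;\Sigma^\pm_T)$ provided by \cite{BoJa-15} make the spurious boundary contribution vanish. Continuity of $b$ and dominated convergence then identify the drift of the tagged limit process with $B[X_s;\rho(s)]$, while the strict separation $\tau_n\nearrow T$ ensures that the specular jumps pass to the limit in the Skorohod topology.

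The main obstacle is precisely this identification step near $\partial\Dd$, since the ratio defining $B$ is not a continuous functional of a measure and both its numerator and denominator may degenerate on the boundary; this is what forces the weighted Sobolev framework, the choice $\alpha>d+3$ in \hypi (which ensures $|u|^3/\weight(u)\in L^1(\er^d)$ and hence control of the velocity moments appearing in the trace estimates), and the double cutoff $\trunc_\epsilon\cdot\phi_\epsilon$ in \eqref{eq:SmoothedNonlinearDrift}. Once any weak limit point is characterized as a solution of \eqref{eq:ConfinedLagrangsystem}, the uniqueness in law of \cite{BoJa-15} forces the limit to be $\delta_\PP$, and the standard symmetry argument for exchangeable systems delivers the product-form convergence stated in Theorem \ref{thm:MainTheorem}.
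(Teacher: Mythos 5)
Your overall architecture coincides with the paper's: the iterated limit is treated exactly in the order you propose, first $N\to\infty$ at fixed $\epsilon$ (propagation of chaos towards the regularized nonlinear process, Proposition \ref{prop:ParticleLimit}), then $\epsilon\to 0$, with tightness from Aldous' criterion and uniqueness in law closing the argument. But the two steps you pass over most quickly are precisely the ones that carry the real difficulty, and as written they contain gaps. For the reflection term, you claim that ``the strict separation $\tau_n\nearrow T$ ensures that the specular jumps pass to the limit in the Skorohod topology''; that separation is a property of the limit process, not of an arbitrary limit point of the empirical laws. In $d>1$ the process $k$ is no longer monotone, so even the finiteness of its total variation under a limit point must be proved. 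The paper does this in Lemma \ref{lem:JumpIdentification} via Sznitman's closed-set argument, showing that the sets $F_M=\{|k|(T)\le M,\ \int_0^T\dist(x(s),\partial\Dd)\,d|k|(s)=0\}$ are closed in $\mathcal{E}$ and that $\EE_{\QQ}[|K^{1,\epsilon,N}|_T]$ is bounded uniformly in $N$ through the trace representation \eqref{eq:StochasticInterpretationTrace} and the uniform $L^2(\weight;\Sigma^-_T)$ bound on $\gamma^-(\rho^{1,\epsilon,N})$ (this is where $\int_{\er^d}|u|^3/\weight(u)\,du<\infty$, hence $\alpha>d+3$, enters). Your Girsanov reduction to $b=0$ aims at the right quantity only if it controls the expected \emph{total variation} $\EE[|K|_T]$ rather than $\EE[|K_T|]$, and after Cauchy--Schwarz against the Girsanov density this requires a second-moment bound on the reflections of the confined Langevin process in a general smooth compact domain, which is not available off the shelf; the paper deliberately routes around this through the trace estimates.

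The second gap is the drift identification as $\epsilon\to0$. Dominated convergence together with continuity of $b$ cannot conclude, because $\gamma\mapsto B[x;\gamma]$ is not continuous for weak convergence of measures: one needs strong convergence of the time-marginal densities $\rho^\epsilon$ to $\rho$. This is the content of Lemma \ref{lem:verification_iii_ii}, proved by writing the Fokker--Planck equations satisfied by $\rho^\epsilon$ and $\rho$, and running a Gronwall/energy estimate on $R^\epsilon=\rho^\epsilon-\rho$ in $V_1(\weight;Q_T)$; the Maxwellian bounds of Proposition \ref{prop:2014} give the uniform lower bound $\minMaxwell>0$ on the denominators and the upper bound $\maxMaxwell<\infty$ that make the quotient stable. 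You correctly name the ingredients (positivity of $\rho$, the cutoff $\trunc_\epsilon$, the weighted spaces), but without the quantitative $L^2(\weight)$ stability estimate the passage from $B_\epsilon[\cdot;\rho^\epsilon]$ to $B[\cdot;\rho]$ in the martingale problem does not follow.
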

Let us clarify the wellposedness of the weak solution to \eqref{eq:ConfinedLagrangsystem}. We summarize the results obtained from \cite{BoJa-15} in the following proposition.
\begin{proposition}[see  \cite{BoJa-15}]\label{prop:2014}
The law of the solution $(X,\UUU)$ to \eqref{eq:ConfinedLagrangsystem} is unique in the subset of  $\mathcal{M}(\Cc([0,T];\overline{\Dd})\times\DD([0,T];\er^{d})$)  that admits time-marginal densities $(\rho(t);\,t\in[0,T])$  in $L^{2}(\omega;\Dd\times\er^{d})$. Moreover, $(\rho(t);\,t\in[0,T])$  solves  in $V_1(\omega;Q_T)$  the PDE
\begin{equation}\label{eq:ConditionalMcKeanVlasov-Pde}
\left\{
\begin{aligned}
&\partial_t  \rho(t,x,u)+u\cdot \nabla_x \rho(t,x,u)-\frac{\sigma^2}{2}\triangle_u\rho(t,x,u)=-\left(B[x;\rho(t)]\cdot \nabla_u\rho(t,x,u)\right)\,\mbox{ in }\,Q_T,\\
&\rho(0,x,u)=\rho_0(x,u)~\mbox{ in }~\Dd\times\er^d,\\
&\gamma^{-}(\rho)(t,x,u)=\gamma^+(\rho)(t,x,u-2(u\cdot n_\Dd(x))n_\Dd(x))\,\mbox{ in }\,\Sigma^{-}_T,
\end{aligned}\right.
\end{equation}
where $\gamma^+(\rho)$ and $\gamma^-(\rho)$ are the trace functions of $\rho$, defined in $L^2(\weight;\Sigma^+_T)$ and $L^2(\weight;\Sigma^-_T)$ respectively, and satisfies the following energy estimate
\begin{equation}\label{EnergyEstimate}
\Vert \rho(t)\Vert^{2}_{L^2(\weight;\Dd\times\er^d)}+\sigma^2\int_0^t\Vert \nabla_u\rho(s)\Vert^{2}_{L^2(\weight;\Dd\times\er^d)}\,ds\leq \Vert \rho_0\Vert^2_{L^2(\weight;\Dd\times\er^d)}\left(1+C\exp(C t)\right),
\end{equation}
where $C>0$ depends only on $d$, $\alpha$ and $\Vert b\Vert_{\infty}$. In addition,
$\rho$ and its traces $\gamma^{\pm}(\rho)$ admit the following Maxwellian bounds: for a.e. $(t,x,u)\in Q_T$,
\begin{equation}\label{eq:MaxwellianBounds}
\exp(a_- t)\left(G_{\sigma}(t)* \underline{P}_0(|\cdot|)\right)^{\nu_-}(u)\leq \rho(t,x,u)\leq \exp(a_+ t)\left(G_{\sigma}(t)* \overline{P}_0(|\cdot|)\right)^{\nu_+}(u)
\end{equation}
and for $d\lambda_{\Sigma_T}$ a.e. $(t,x,u)\in \Sigma^{\pm}_{T}$,
\begin{equation}\label{eq:TraceBounds}
\exp(a_- t)\left(G_{\sigma}(t)* \underline{P}_0(|\cdot|)\right)^{\nu_-}(u)\leq \gamma^{\pm}(\rho)(t,x,u)\leq \exp(a_+ t)\left(G_{\sigma}(t)* \overline{P}_0(|\cdot|)\right)^{\nu_+}(u)
\end{equation}
where $G_{\sigma}(t)$ is the centered Gaussian density function with variance $\sigma^2 t$, $*$ stands for the convolution product, $a_{\pm}$, $\nu_{\pm}$ are constants depending only on $T,d,\alpha$ and $\Vert b\Vert_{\infty}$ and are such that $a_{-}<0$, $a_{+}>0$ and $\nu_{\pm}>0$.
\end{proposition}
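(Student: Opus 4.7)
The plan is to prove the proposition by reducing it to a PDE analysis of the associated linearised, then nonlinear, kinetic Fokker--Planck equation in the weighted space $V_1(\weight;Q_T)$, and then transferring PDE well-posedness back to the stochastic formulation. I denote by $R_x u := u - 2(u\cdot \nd(x))\nd(x)$ the specular reflection at $x\in\partial\Dd$; both $\weight(u)$ and $|u\cdot \nd(x)|$ are invariant under $R_x$, which is the structural fact driving the whole argument. The starting point is the frozen-drift linear problem: for a given bounded measurable $\widetilde B(t,x)$, solve
\begin{equation*}
\partial_t \rho + u\cdot\nabla_x \rho - \tfrac{\sigma^2}{2}\Delta_u \rho + \widetilde B \cdot \nabla_u \rho = 0 \text{ in } Q_T, \quad \gamma^-\rho(t,x,u)=\gamma^+\rho(t,x,R_x u).
\end{equation*}
Well-posedness in $V_1(\weight;Q_T)$ is obtained via the classical trace theory for kinetic transport equations (Bardos--Golse, Carrillo, Mischler), which produces $\gamma^\pm\rho \in L^2(\weight;\Sigma^\pm_T)$ and makes the specular map an isometry between these two spaces. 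Testing the equation against $\rho\,\weight$ and integrating by parts over $Q_t$ yields a boundary contribution proportional to $\int_{\Sigma_t}(u\cdot \nd)\weight\,\rho^2\, d\lambda_{\Sigma_T}$, which vanishes thanks to the specular identity; the drift and diffusion contributions are controlled through $|\nabla_u\weight|/\weight \lesssim 1$, and Gronwall delivers the energy estimate \eqref{EnergyEstimate}.

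For the Maxwellian bound \eqref{eq:MaxwellianBounds}, I would set $\overline\rho(t,u):=e^{a_+ t}(G_\sigma(t)*\overline P_0(|\cdot|))^{\nu_+}(u)$, purely radial in $u$, so that the specular boundary condition is automatically satisfied and $\overline\rho$ is a supersolution of the linear equation with drift bounded by $\|b\|_\infty$ (the constants $a_+$, $\nu_+$ are tuned to absorb both the drift and the power that arises in the fixed-point step below). Testing the weak inequality satisfied by $(\rho-\overline\rho)^+$ against a Stampacchia truncation then gives $(\rho-\overline\rho)^+\equiv 0$; the lower bound in \eqref{eq:MaxwellianBounds} is obtained symmetrically from $\underline P_0$. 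The trace bounds \eqref{eq:TraceBounds} follow because $\overline\rho$ and $\underline\rho$ inherit the same bounds on $\Sigma^\pm_T$ through their radial structure, and $\gamma^\pm \rho$ is squeezed between them via the kinetic trace inequality associated with the $V_1(\weight;Q_T)$ framework.

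To pass to the nonlinear PDE \eqref{eq:ConditionalMcKeanVlasov-Pde} and to the uniqueness for the process, I run a Picard iteration $\rho^{(n+1)}=\Lambda(\rho^{(n)})$, where $\Lambda(\gamma)$ solves the linear PDE from Step~1 with drift $B[\,\cdot\,;\gamma(t)]$. The uniform lower Maxwellian bound ensures that $\int \gamma(t,x,v)\,dv$ is bounded below uniformly in $x\in\Dd$, so $B[\,\cdot\,;\gamma(t)]$ is well-defined, bounded by $\|b\|_\infty$, and Lipschitz in $\gamma$ for the $L^2(\weight;\Dd\times\er^d)$ topology. A short-time contraction, iterated up to $T$ thanks to the a priori estimate, delivers a unique $\rho \in V_1(\weight;Q_T)$ satisfying \eqref{eq:ConditionalMcKeanVlasov-Pde}--\eqref{eq:TraceBounds}. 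For the law of $(X,\UUU)$, given any candidate solution whose time-marginals admit densities in $L^2(\weight;\Dd\times\er^d)$, Itô's formula on smooth test functions shows that these densities form a $V_1(\weight;Q_T)$-solution of \eqref{eq:ConditionalMcKeanVlasov-Pde} with the specular trace identification; by PDE uniqueness they must equal $\rho$, so the drift $B[\,\cdot\,;\rho(t)]$ is fixed, and uniqueness in law then follows from the wellposedness of the confined Langevin process (Theorem~2.1 of \cite{BoJa-15}, case $b=0$) combined with Girsanov's transformation.

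The technical heart of the proof is the rigorous trace theory for the degenerate kinetic operator $\partial_t+u\cdot\nabla_x-\tfrac{\sigma^2}{2}\Delta_u$ on a curved $\Cc^3$ domain, in the polynomially weighted $L^2$ framework: one must define $\gamma^\pm \rho$ without any a priori regularity of $\rho$ up to the grazing set $\Sigma^0_T$, establish the specular identification as an equality of $L^2$-traces rather than pointwise values, and make the cancellation of the boundary term in the energy identity fully rigorous. The companion difficulty is the construction of radial Maxwellian envelopes that are simultaneously super- and subsolutions compatible with the reflection, which is precisely what forces the radial form $\overline P_0(|\cdot|)$, $\underline P_0(|\cdot|)$ of the bounds in \hypiii.
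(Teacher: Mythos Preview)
The paper does not prove Proposition~\ref{prop:2014}: it is stated explicitly as a summary of results imported from \cite{BoJa-15} (see the sentence preceding the proposition and the paragraph following it, which refers to \cite[Definition~1.1, Theorem~3.3, Proposition~3.14]{BoJa-15} for the detailed formulation and proof). So there is no in-paper argument to compare your proposal against.

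That said, your sketch is consistent with the structure one infers from the paper's own use of \cite{BoJa-15}: the trace theory for the kinetic operator in $V_1(\weight;Q_T)$ (the paper invokes a density argument tied to $V_1(\weight;Q_T)$ and the smoothness of $\partial\Dd$ for the existence of $\gamma^\pm$), the weighted energy identity with the specular boundary contribution cancelling (cf.\ the use of \cite[Lemma~3.8]{BoJa-15} in the proof of Lemma~\ref{lem:verification_iii_ii}), the Maxwellian super/subsolutions built from the radial envelopes $\overline P_0,\underline P_0$, and the passage from PDE uniqueness to uniqueness in law via Girsanov and the confined Langevin process. One point to be careful about in a full write-up: the Stampacchia truncation $(\rho-\overline\rho)^+$ is not obviously an admissible test function in the kinetic trace framework, since membership in $V_1(\weight;Q_T)$ alone does not guarantee that the boundary integral involving $(\rho-\overline\rho)^+$ is well defined up to the grazing set; in \cite{BoJa-15} this is handled through the specific construction of the trace and a careful limiting procedure, which your sketch acknowledges as the ``technical heart'' but does not resolve.
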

Solution of Equation \eqref{eq:ConditionalMcKeanVlasov-Pde} and the related notion of trace functions stated in Proposition \ref{prop:2014} are understood in the weak (distributional) sense and we refer to \cite[Definition $1.1$ and Theorem $3.3$]{BoJa-15} for a  detailed existence result   formulation. For the sake of completeness, let us mention that the existence of $\gamma^+(\rho)$ and $\gamma^-(\rho)$ is directly related to the notion of trace problems for kinetic equations (see references in \cite{BoJa-15}), and their construction is granted, on $\Sigma^+_T$ and $\Sigma^-_T$ respectively, by a density argument related to the solution space $V_1(\weight,Q_T)$ and the smoothness of $\partial\Dd$.
Proposition \ref{prop:2014} is clearly also true when the drift $B[x;\cdot]$ is replaced by its smoothed version $B_\epsilon[x;\cdot]$, or by  a linear and bounded drift $V(t,x)$.  
The combination of Proposition \ref{prop:2014} with the following corollary allows to conclude on the uniqueness in law of the solution to \eqref{eq:ConfinedLagrangsystem}.
\begin{corollary}\label{coro:TraceAndMaxwellBounds}
Let $V\in L^\infty((0,T)\times\Dd)$ and assume \hyp. Any weak solution $(X_t,U_t;\,t\in[0,T])$ to
\begin{align}\label{XU-driftV}
\left\{
\begin{array}{l}
X_t=X_0+\int_0^t U_s\,ds, \\
U_t=U_0+\int_0^t V(s,X_s)\,ds+\sigma W_t -2\sum_{0<s\leq t}(U_{s^-}\cdot n_{\Dd}(X_s))n_{\Dd}(X_s)\1_{\{X_s\in\partial \Dd\}}
\end{array}
\right.
\end{align}
admits time-marginal densities $(\rho(t);\,t\in[0,T])$ {such that $\rho(t)$ is in $L^2(\omega,\Dd\times\er^d)$ for all $t\in[0,T]$}. The traces  $\gamma^{+}(\rho)$ and $\gamma^{-}(\rho)$ are in $L^2(\weight;\Sigma^+_T)$ and $L^2(\weight;\Sigma^-_T)$ respectively, and such that for all $f_1\in \Cc_c(\Sigma^+_T)$, $f_2\in \Cc_c(\Sigma^-_T)$,
\begin{equation}\label{eq:StochasticInterpretationTrace}
\begin{aligned}
&\EE\left[\sum_{n\in\NN}f_1(\tau_n,X_{\tau_n},U_{\tau_n^-})\1_{\{\tau_n\leq T\}}\right]=\int_{\Sigma^+_T} (u\cdot n_{\Dd}(x))f_1(t,x,u)\gamma^+(\rho)(t,x,u)d\lambda_{\Sigma_T},\\
&\EE\left[\sum_{n\in\NN}f_2(\tau_n,X_{\tau_n},U_{\tau_n})\1_{\{\tau_n\leq T\}}\right]=-\int_{\Sigma^-_T} (u\cdot n_{\Dd}(x))f_2(t,x,u)\gamma^-(\rho)(t,x,u)d\lambda_{\Sigma_T}.
\end{aligned}
\end{equation}
\end{corollary}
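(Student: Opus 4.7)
The plan is to combine Itô's formula for $\psi(t,X_t,U_t)$ with the weak formulation of the Fokker--Planck equation satisfied by the density $\rho$, selecting test functions $\psi$ that isolate either $\gamma^{+}(\rho)$ or $\gamma^{-}(\rho)$.

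Existence of $\rho(t)\in L^{2}(\weight;\Dd\times\er^{d})$ together with the traces $\gamma^{\pm}(\rho)\in L^{2}(\weight;\Sigma^{\pm}_{T})$ follows from the extension of Proposition~\ref{prop:2014} to a generic bounded linear drift (stated just before the corollary), since the energy estimate \eqref{EnergyEstimate} and the trace construction in \cite{BoJa-15} depend on the drift only through its $L^{\infty}$-norm. In particular $\rho\in V_{1}(\weight;Q_{T})$ solves in the weak sense
\begin{equation*}
\partial_{t}\rho+u\cdot\nabla_{x}\rho+V(t,x)\cdot\nabla_{u}\rho-\tfrac{\sigma^{2}}{2}\Delta_{u}\rho=0,\quad \rho(0,\cdot,\cdot)=\rho_{0},
\end{equation*}
coupled with the specular boundary condition $\gamma^{-}(\rho)(t,x,u)=\gamma^{+}(\rho)(t,x,u-2(u\cdot\nd(x))\nd(x))$ on $\Sigma^{-}_{T}$.

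To derive the first identity of \eqref{eq:StochasticInterpretationTrace}, I extend $f_{1}\in\Cc_{c}(\Sigma^{+}_{T})$ to a function $\psi\in\Cc^{1,2}_{c}((0,T)\times\overline{\Dd}\times\er^{d})$ such that $\psi=f_{1}$ on $\Sigma^{+}_{T}$ and $\psi\equiv 0$ on $\Sigma^{-}_{T}$; the $\Cc^{3}$-regularity of $\partial\Dd$ allows such an extension via tubular coordinates together with a smooth cutoff in $(u\cdot\nd(x))$. Applying Itô's formula along \eqref{XU-driftV} and observing that $(\tau_{n},X_{\tau_{n}},U_{\tau_{n}^{-}})\in\Sigma^{+}_{T}$ while the specularly reflected post-jump state $(\tau_{n},X_{\tau_{n}},U_{\tau_{n}})\in\Sigma^{-}_{T}$ where $\psi$ vanishes, I obtain after taking expectation
\begin{equation*}
\EE[\psi(T,X_{T},U_{T})]-\EE[\psi(0,X_{0},U_{0})]=\EE\!\int_{0}^{T}\!\Ll\psi(s,X_{s},U_{s})\,ds-\EE\!\sum_{n\geq 1}f_{1}(\tau_{n},X_{\tau_{n}},U_{\tau_{n}^{-}})\1_{\{\tau_{n}\leq T\}},
\end{equation*}
where $\Ll=\partial_{t}+u\cdot\nabla_{x}+V\cdot\nabla_{u}+\tfrac{\sigma^{2}}{2}\Delta_{u}$. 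On the PDE side, multiplying the Fokker--Planck equation by $\psi$ and integrating by parts over $Q_{T}$---the surface term arising only from the free transport $u\cdot\nabla_{x}$ after applying Green's formula on $\Dd$---gives
\begin{equation*}
\int_{\Dd\times\er^{d}}\!\psi(T)\rho(T)\,dx\,du-\int_{\Dd\times\er^{d}}\!\psi(0)\rho_{0}\,dx\,du=\int_{Q_{T}}\!\rho\,\Ll\psi\,dx\,du\,dt-\int_{\Sigma^{+}_{T}}(u\cdot\nd(x))\gamma^{+}(\rho)f_{1}\,d\lambda_{\Sigma_{T}}.
\end{equation*}
Since $\rho(t)$ is the density of $(X_{t},U_{t})$, subtracting the two expressions yields the first equality in \eqref{eq:StochasticInterpretationTrace}. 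The second equality is obtained by the symmetric choice $\psi=f_{2}$ on $\Sigma^{-}_{T}$ and $\psi\equiv 0$ on $\Sigma^{+}_{T}$, in which case only the jump contribution $\psi(\tau_{n},X_{\tau_{n}},U_{\tau_{n}})=f_{2}(\tau_{n},X_{\tau_{n}},U_{\tau_{n}})$ survives and the PDE side picks up only the $\Sigma^{-}_{T}$-trace.

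The main obstacles will be the summability of the series over $(\tau_{n})$ and the construction of the smooth extension. Summability is granted by the compact support of $f_{l}$ away from $\Sigma^{0}_{T}$, which restricts the relevant indices to reflection times whose normal velocity is bounded away from zero; combined with an a priori moment estimate on $\#\{n:\tau_{n}\leq T\}$ transferred from the confined Langevin process via Girsanov, the sum becomes integrable. The extension $\psi$ is first carried out for $f_{l}\in\Cc^{\infty}_{c}$ via the tubular-coordinate construction described above, after which the two identities extend to $f_{l}\in\Cc_{c}$ by density using the $L^{2}(\weight;\Sigma^{\pm}_{T})$-continuity of the trace maps.
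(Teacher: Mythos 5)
Your treatment of the trace identities \eqref{eq:StochasticInterpretationTrace} is essentially the paper's argument: both proofs apply It\^o's formula to a smooth test function along \eqref{XU-driftV} and subtract the Green formula \eqref{eq:GreenFormula} satisfied by the Fokker--Planck solution. The paper keeps a general test function and invokes the surjectivity of the restriction map $\phi\mapsto\phi|_{\Sigma_T}$ at the end, while you choose $\psi$ vanishing on one of $\Sigma^{\pm}_T$ so as to isolate each trace; these are equivalent.

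The genuine gap is in your first step. The corollary's first assertion --- that \emph{any} weak solution of \eqref{XU-driftV} has time marginals admitting densities in $L^2(\weight;\Dd\times\er^d)$ --- does not follow from the extension of Proposition~\ref{prop:2014} to a bounded linear drift: that proposition asserts uniqueness only \emph{within} the class of laws already assumed to admit such densities, and it constructs a solution of the PDE; it says nothing about the marginals of a given, arbitrary weak solution of the SDE. (The remark following the corollary makes clear that the corollary is precisely the tool that removes this a priori restriction.) The paper proves the density claim by a separate, substantive argument: by the Riesz representation theorem it suffices to establish the bound \eqref{eq:weightedL2bound} on $\EE_{\QQ}[\sqrt{\weight(\UUU_t)}\,\psi(X_t,\UUU_t)]$ by $C\Vert\psi\Vert_{L^2(\Dd\times\er^d)}$, and this is obtained via a Girsanov change of measure removing $V$, domination of $\rho_0$ by $\overline{P}_0$ from \hypiii, the explicit density $\tfrac{dx}{|\Dd|}\big(G_\sigma(t)*\overline{P}_0(|\cdot|)/\Vert\overline{P}_0\Vert_1\big)du$ of the driftless confined Langevin process, and the pointwise bound $\weight(u)\,(G_\sigma(t)*\overline{P}_0(|\cdot|))(u)\leq C(\alpha)$. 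Only after this, and after identifying the resulting density with the $V_1(\weight;Q_T)$-solution of the Fokker--Planck equation (the mild-equation identification of \cite{BoJa-15}, Proposition 4.2), is one entitled to write ``$\rho(t)$ is the density of $(X_t,U_t)$'' and to run your It\^o/Green computation with the traces $\gamma^{\pm}(\rho)$. As written, your appeal to the PDE result begs the question at exactly the point the corollary is designed to settle. A repairable alternative route would be to invoke weak uniqueness of \eqref{XU-driftV} (by Girsanov from the $b=0$ case) together with the existence of one weak solution having the stated marginals, but that uniqueness step must then be stated and justified explicitly rather than left implicit.
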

Notice that any solution to \eqref{eq:ConfinedLagrangsystem} is also a weak solution to \eqref{XU-driftV} for the bounded drift $V(t,x)=B[x;\rho(t)]$. Corollary \ref{coro:TraceAndMaxwellBounds} ensures that the time-marginal densities are in $L^2(\weight;Q_T)$ and Proposition \ref{prop:2014} allows to conclude the uniqueness of $\PP$ introduced in Theorem \ref{thm:MainTheorem}.

The proof of Corollary \ref{coro:TraceAndMaxwellBounds} is postponed in the appendix.  The rest of the paper is devoted to the proof of the propagation of chaos result.

Although  we give a particle approximation of the confined Lagrangian model, we are not able to use such approximation to construct a solution under lighter hypotheses than \hyp. In particular, we still have a deep use of the PDE analysis of the Fokker Planck equation. The main difficulty  resides in the uniform integrability result of the density traces, that we are able to show only with the strong Maxwellian bound tool.

\section{Proof of Theorem~\ref{thm:MainTheorem}}\label{Section_proof}

Equipped with the Skorokhod topology,  $\mathcal{E}$ is a Polish space. We denote by $(\mathcal{B}_{t};t\in[0,T])$ the filtration associated to the canonical process $(x(t),u(t),k(t);t\in [0,T])$ of $\mathcal{E}$.

The proof consists in the study of the double limits, first as $N$ tends to $\infty$, next as $\epsilon$ tends to $0$. Mainly, we will detail the two following steps:
\begin{proposition}\label{prop:ParticleLimit}Assume \hyp and fix  $\epsilon>0$.  The  SDE
\begin{equation}\label{eq:SmoothedConfinedLagrangsystem}
\left\{
\begin{aligned}
X^\epsilon_t&=X_0+\int_0^t \UUU^\epsilon_s ds,\quad
\UUU^\epsilon_t =\UUU_0+\int_0^t B_\epsilon[X^\epsilon_s;\mu^\epsilon(s)]ds + \sigma W_t+K^\epsilon_t,\\
K^\epsilon_t&=-2\sum_{0<s\leq t}\left(\UUU^\epsilon_{s^-}\cdot n_{\Dd}(X^\epsilon_s)\right)n_{\Dd}(X^\epsilon_s)\1_{\{X^\epsilon_s\in\partial\Dd\}}, \quad \mu^\epsilon(t)=\mbox{Law}(X^\epsilon_t,\UUU^\epsilon_t),
\end{aligned}
\right.
\end{equation}
has a unique weak solution, and we denote by $\PP^\epsilon$ the law  on $\mathcal{E}$ of $(X^\epsilon, U^\epsilon,K^\epsilon)$.   Then $\{\PP^{\epsilon,N};\,N\geq 1\}$ is $\PP^{\epsilon}$-chaotic; namely
for all  $k\geq 2$ and all $(F_{l},1\leq l\leq k)$ of functions in
$\Cc_{b}(\mathcal{E})$,
\begin{equation}\label{eq:SmoothedPropagChaos}
\lim_{N\rightarrow +\infty}\langle F_{1}\otimes \cdots F_{k}\otimes 1 \otimes 1 \otimes \cdots \otimes 1,\PP^{\epsilon,N}
\rangle= { \prod_{l=1}^{k}}
\langle\PP^{\epsilon},F_{l}\rangle.
\end{equation}
\end{proposition}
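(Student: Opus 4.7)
The plan is a Sznitman-type propagation-of-chaos argument combined with Girsanov's transformation, exploiting that for fixed $\epsilon>0$ the drift $B_\epsilon[x;\gamma]$ is bounded by $\|b\|_\infty$, jointly continuous in $(x,\gamma)$ for the weak topology in $\gamma$, and Lipschitz in $\gamma$ in total variation (the denominator of \eqref{eq:SmoothedNonlinearDrift} is bounded below by $\epsilon$ while $\phi_\epsilon$ is bounded and smooth). Starting from the wellposedness of the confined Langevin process (the $b=0$ case of Theorem~2.1 in \cite{BoJa-15}), I first address \eqref{eq:SmoothedConfinedLagrangsystem} by freezing $\mu\in\mathcal{M}(\mathcal{E})$ and inserting the bounded drift $B_\epsilon[\cdot\,;\mu(\cdot)]$ via Girsanov: this produces a weak solution whose time-marginal densities lie in $L^{2}(\weight;\Dd\times\er^d)$ and satisfy~\eqref{EnergyEstimate}. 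A fixed-point argument on $\mathcal{M}(\mathcal{E})$ then closes existence and uniqueness for the nonlinear McKean--Vlasov problem, with contraction on a short interval obtained from the Lipschitz character of $B_\epsilon[x;\cdot]$ in total variation and a Gronwall inequality on the time-marginals, then iterated up to $[0,T]$.

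\textbf{Tightness.} By exchangeability it suffices to prove tightness of the law of a typical particle $(X^{1,\epsilon,N},U^{1,\epsilon,N},K^{1,\epsilon,N})$ on $\mathcal{E}$. Since $\|B_\epsilon\|_\infty\leq\|b\|_\infty$, Girsanov's transformation reduces the interacting system to $N$ independent copies of the confined Langevin process under an equivalent measure $\widetilde{\QQ}^N$ whose Radon--Nikodym density $Z^N$ with respect to $\QQ$ satisfies $\sup_N\EE[|Z^N|^p+|Z^N|^{-p}]<\infty$ for every $p\geq 1$ by Novikov's criterion. Under $\widetilde{\QQ}^N$ each reference particle satisfies the hypotheses of Corollary~\ref{coro:TraceAndMaxwellBounds} with the Maxwellian trace bounds \eqref{eq:TraceBounds}, yielding uniform moment estimates and, crucially, a uniform control on $\widetilde\EE[\sum_n|U^{1,\epsilon,N}_{(\tau^{1}_{n})^{-}}|\1_{\{\tau^{1}_n\leq T\}}]$ via the stochastic interpretation \eqref{eq:StochasticInterpretationTrace}. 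Transferring back to $\QQ$ by Cauchy--Schwarz in $Z^N$ and applying Aldous' criterion to $U^{1,\epsilon,N}$ (whose jump measure on $[0,T]$ is then uniformly integrable), together with the continuity of $X^{1,\epsilon,N}$, delivers the required tightness on $\mathcal{E}$.

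\textbf{Identification.} Let $\Pi^\epsilon$ be any weak accumulation point of $\mathrm{Law}(\bar\mu^{\epsilon,N})$ in $\mathcal{M}(\mathcal{M}(\mathcal{E}))$. I show that $\Pi^\epsilon$-almost every $\mu$ solves the nonlinear martingale problem associated with \eqref{eq:SmoothedConfinedLagrangsystem}: for every $f\in\Cc^2_b(\overline{\Dd}\times\er^d)$ satisfying the specular compatibility $f(x,u)=f(x,u-2(u\cdot n_\Dd(x))n_\Dd(x))$ on $\partial\Dd$, the process
\[
M^f_t:=f(x(t),u(t))-f(x(0),u(0))-\int_0^t\bigl(u(s)\cdot\nabla_x f+B_\epsilon[x(s);\mu_s]\cdot\nabla_u f+\tfrac{\sigma^2}{2}\triangle_u f\bigr)(x(s),u(s))\,ds
\]
should be a $(\mathcal{B}_t)$-martingale under $\mu$, where $\mu_s$ denotes the $(x,u)$-marginal of $\mu$ at time $s$. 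This is checked by applying It\^o's formula to $f(X^{i,\epsilon,N}_t,U^{i,\epsilon,N}_t)$, cancelling the reflection contributions via the compatibility of $f$, averaging over $i$, and passing to the limit $N\to\infty$ thanks to the joint continuity of $(x,\gamma)\mapsto B_\epsilon[x;\gamma]$ for the weak topology and the moment bounds of the previous step. The uniqueness established in the first paragraph then forces $\Pi^\epsilon=\delta_{\PP^\epsilon}$, which is equivalent to \eqref{eq:SmoothedPropagChaos}.

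\textbf{Main obstacle.} The central difficulty is to propagate, uniformly in $N$, the PDE-level controls on density traces that tame the specular reflections. These are available in the nonlinear limit through Proposition~\ref{prop:2014} and Corollary~\ref{coro:TraceAndMaxwellBounds}, but at the particle level the random drift $B_\epsilon[X^{i}_s;\bar\mu^{\epsilon,N}_s]$ precludes any direct Fokker--Planck analysis of the $N$-particle density. The Girsanov reduction to $N$ independent confined Langevin processes under $\widetilde{\QQ}^N$ is what makes the Maxwellian trace bounds applicable uniformly in $N$, thereby ruling out accumulation of the reflection times $\tau^{i}_n$ and controlling the total variation of each $K^{i,\epsilon,N}$; this is also the point at which the hypothesis $\alpha>d+3$ in \hypi, needed to integrate $|u|$-moments against $\weight(u)^{-1}$, enters decisively.
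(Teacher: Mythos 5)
Your overall strategy (tightness of the empirical laws plus identification of accumulation points, closed by uniqueness of the nonlinear equation) is the same Sznitman-type scheme the paper uses, but there is a genuine gap at the identification step, and it sits exactly where the paper's main technical effort lies. The chaoticity statement \eqref{eq:SmoothedPropagChaos} concerns test functions on $\mathcal{E}=\Cc([0,T];\overline{\Dd})\times\DD([0,T];\er^d)\times\DD([0,T];\er^d)$, so any accumulation point must be identified as a law of the full triple $(x,u,k)$, not only of $(x,u)$. Your martingale problem with specularly compatible test functions $f(x,u)=f(x,u-2(u\cdot n_{\Dd}(x))n_{\Dd}(x))$ deliberately cancels the reflection contributions, so it says nothing about the coordinate $k$; and the uniqueness you then invoke to force $\Pi^\epsilon=\delta_{\PP^\epsilon}$ is uniqueness for the SDE \eqref{eq:SmoothedConfinedLagrangsystem}, in which $k$ is the explicit specular jump functional of $(x,u)$. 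To apply it you must first prove that under $\Pi^\epsilon$-a.e.\ limit law the process $k$ is a finite-variation, pure-jump process whose variation measure is carried by $\{s;\,x(s)\in\partial\Dd\}$ and whose jumps equal $-2(u(s^-)\cdot n_{\Dd}(x(s)))n_{\Dd}(x(s))$. This is the content of the paper's Lemma \ref{lem:JumpIdentification}, and it is emphatically not free: the jumps are not of one sign, so finite variation does not pass to the limit by monotonicity. The paper propagates it via Sznitman's closed sets $F_M=\{|k|(T)\leq M,\ \int_0^T\dist(x(s),\partial\Dd)\,d|k|(s)=0\}$ (whose closedness in the Skorokhod topology itself requires an argument on the weak convergence of $d|k^n\circ\lambda_n|$ to $d|k|$), combined with the uniform bound $\sup_N\EE_{\QQ}[|K^{1,\epsilon,N}|_T]<+\infty$ derived from the trace representation \eqref{eq:StochasticInterpretationTrace} and the Maxwellian trace bounds. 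You invoke the trace bounds for tightness but never use them to pin down the limit of $k$; without that, the proof does not close.

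A second, more local error: the claim that the full $N$-particle Girsanov density $Z^N$ satisfies $\sup_N\EE[|Z^N|^p+|Z^N|^{-p}]<+\infty$ is false. Removing the drift of all $N$ particles yields an exponential martingale built on $\sum_{i=1}^N\int_0^\cdot B_\epsilon[X^{i,\epsilon,N}_s;\bar\mu^{\epsilon,N}_s]\cdot dW^i_s$, whose $p$-th moments grow like $\exp(CN)$. Since you only need marginal quantities for particle $1$, you should instead remove only the drift of particle $1$ (bounded by $\Vert b\Vert_\infty$ whatever the empirical measure does); under that single-particle change of measure the first particle is a confined Langevin process, the Maxwellian trace bounds apply, and the transfer back by Cauchy--Schwarz is uniform in $N$. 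This is fixable but, as written, the step is wrong; note also that Corollary \ref{coro:TraceAndMaxwellBounds} is stated for deterministic drifts $V\in L^\infty((0,T)\times\Dd)$, so applying it to particle $1$, whose drift is random through $\bar\mu^{\epsilon,N}$, requires precisely this kind of detour (or an extension to bounded adapted drifts). Your fixed-point construction of the nonlinear equation via the total-variation Lipschitz property of $B_\epsilon$ is a plausible alternative to the paper's PDE-based wellposedness, but it is ancillary to the main missing piece above.
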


\begin{lemma}\label{lem:verification_iii_ii}
For any  converging subsequence   of $\{\PP^\epsilon;\,\epsilon>0\}$ (that we still denote by $\PP^\epsilon$), the sequence of time-marginals densities $\{\rho^\epsilon(t)=\PP^\epsilon\circ(x(t),u(t))^{-1}; \epsilon>0\}$ converges in $L^1(Q_T)$ and in $L^2(\weight;Q_T)$ to the time-marginals densities $\{\rho(t)=\QQ\circ(X_t,U_t)^{-1}\}$ of the solution to  \eqref{eq:ConfinedLagrangsystem}, when $\epsilon$ tends to $0$.
\end{lemma}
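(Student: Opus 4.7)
The plan is to obtain uniform-in-$\epsilon$ estimates on $\rho^\epsilon$ from Proposition~\ref{prop:2014}, upgrade the weak convergence $\PP^\epsilon\to\PP^*$ of the extracted subsequence to strong $L^1$ and $L^2(\weight)$ convergence of the time-marginal densities by a pointwise-domination argument, and finally identify the limit $\rho^*$ with $\rho$ via the uniqueness part of Proposition~\ref{prop:2014}. Concretely, I would first freeze the nonlinear drift of \eqref{eq:SmoothedConfinedLagrangsystem} by setting $V^\epsilon(t,x):=B_\epsilon[x;\mu^\epsilon(t)]$, which satisfies $\|V^\epsilon\|_\infty\leq\|b\|_\infty$ directly from \eqref{eq:SmoothedNonlinearDrift}; Proposition~\ref{prop:2014} (or Corollary~\ref{coro:TraceAndMaxwellBounds} applied to this bounded linear drift) then yields, uniformly in $\epsilon$,
\begin{equation*}
\sup_{\epsilon>0}\|\rho^\epsilon\|_{V_1(\weight;Q_T)}<+\infty,
\qquad
\rho^\epsilon(t,x,u)\leq \maxMaxwell(t,u):=e^{a_+T}\bigl(G_\sigma(T)*\overline{P}_0(|\cdot|)\bigr)^{\nu_+}(u),
\end{equation*}
where $\maxMaxwell\in L^1(Q_T)$ and $\weight\,\maxMaxwell^2\in L^1(Q_T)$ thanks to \hypiii.

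Second, the convergence of $\PP^\epsilon$ in $\Mm(\Ee)$ transfers, via continuity of the evaluation map at continuity points of the limit paths (all $t$ for the continuous $x$-coordinate, almost every $t$ for the càdlàg $(u,k)$-coordinates), to weak convergence of the time-marginals $\mu^\epsilon(t)\rightharpoonup\mu^*(t)$ for a.e.\ $t\in(0,T]$. The uniform domination $\rho^\epsilon(t,\cdot)\leq \maxMaxwell(t,\cdot)\in L^1(\Dd\times\er^d)$ together with the Dunford--Pettis criterion then ensures that $\mu^*(t)$ admits a density $\rho^*(t)$ and that $\rho^\epsilon(t)\to\rho^*(t)$ in $L^1(\Dd\times\er^d)$. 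Dominated convergence in time with majorant $2\maxMaxwell$ upgrades this to convergence in $L^1(Q_T)$, while extracting a further a.e.\ converging subsequence and dominating $\weight\,|\rho^\epsilon|^2$ by $\weight\,\maxMaxwell^2$ yields the convergence in $L^2(\weight;Q_T)$.

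Finally, I would identify $\rho^*$ with $\rho$ by verifying that $\PP^*$ is concentrated on weak solutions of \eqref{eq:ConfinedLagrangsystem} and invoking the uniqueness part of Proposition~\ref{prop:2014}. The main obstacle here is the passage to the limit inside the nonlinear conditional-expectation drift $B_\epsilon[\cdot;\mu^\epsilon(t)]\to B[\cdot;\rho^*(t)]$: the mollification by $\phi_\epsilon$ and the boundary cutoff $\trunc_\epsilon$ must be shown to be asymptotically harmless, the denominator must be kept uniformly away from zero (this is precisely where the strictly positive lower Maxwellian bound in \eqref{eq:MaxwellianBounds} enters), and the continuity and boundedness of $b$ from \hypii are needed to handle the numerator. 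Combined with the $L^1(Q_T)$ convergence obtained above, these ingredients give $B_\epsilon[\cdot;\mu^\epsilon(t)]\to B[\cdot;\rho^*(t)]$ a.e.\ on $Q_T$, which is enough to pass to the limit in the martingale problem associated to \eqref{eq:SmoothedConfinedLagrangsystem} and conclude $\rho^*=\rho$.
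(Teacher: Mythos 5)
There is a genuine gap at the central step of your argument, namely the upgrade from weak convergence of the laws to strong convergence of the densities. Weak convergence $\mu^\epsilon(t)\rightharpoonup\mu^*(t)$ together with the uniform domination $\rho^\epsilon(t,\cdot)\leq \maxMaxwell(t,\cdot)$ and the Dunford--Pettis criterion only yields relative \emph{weak} compactness in $L^1(\Dd\times\er^d)$ and hence identifies $\mu^*(t)$ as having a density $\rho^*(t)$ with $\rho^\epsilon(t)\rightharpoonup\rho^*(t)$ weakly in $L^1$; it does not give $\rho^\epsilon(t)\to\rho^*(t)$ strongly in $L^1$. Your subsequent appeal to dominated convergence is vacuous here because the Maxwellian envelope provides a majorant but no pointwise a.e.\ convergence of $\rho^\epsilon$ to feed into it (the same objection applies to the ``further a.e.\ converging subsequence'' you extract for the $L^2(\weight)$ part: an a.e.\ convergent subsequence is available only \emph{after} strong $L^1$ convergence is known). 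Some additional compactness in the $(x,u)$ variables is indispensable --- note that the translation equicontinuity \eqref{L1Continuity}, which would provide it, is \emph{deduced from} this lemma in the paper, so it cannot be invoked. Moreover, your final identification step needs $B_\epsilon[\cdot;\mu^\epsilon(t)]\to B[\cdot;\rho^*(t)]$, which itself requires the strong convergence of the densities you have not yet secured, so the compactness-and-identification strategy is also structurally circular at that point.

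The paper's proof avoids all of this by being a quantitative stability estimate rather than a compactness argument: it writes the Fokker--Planck equation satisfied by $R^\epsilon=\rho^\epsilon-\rho$ in $V_1(\weight;Q_T)$, where $\rho$ is the density of the (already known, unique) solution to \eqref{eq:ConfinedLagrangsystem} from Proposition \ref{prop:2014}, applies the energy identity of \cite[Lemma 3.8]{BoJa-15}, and then decomposes $B_\epsilon[\cdot;\rho^\epsilon]-B[\cdot;\rho]$ so that the denominators are controlled from below by $\minMaxwell=\inf_{(t,x)}\int_{\er^d}\rho(t,x,v)\,dv>0$ (the Maxwellian lower bound of the \emph{limit} density, not of $\rho^\epsilon$). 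This produces $\int|B_\epsilon[\cdot;\rho^\epsilon]-B[\cdot;\rho]|^2\leq C\big(\int|\rho^\epsilon-\rho|^2+\Tt_\epsilon\big)$ with $\Tt_\epsilon\to0$ coming only from the mollification and cutoff errors on the fixed function $\rho$, and Gronwall's inequality then gives $\Vert R^\epsilon\Vert^2_{V_1(\weight;Q_T)}\leq C\,\Tt_\epsilon\to0$ directly --- for the full sequence, with no limit point to identify. If you wish to salvage your route, you must supply the missing strong compactness by some independent mechanism (a velocity-averaging lemma or an equicontinuity estimate); the Gronwall argument is the mechanism the paper chose, and it simultaneously disposes of the nonlinear-drift identification you flagged as the main obstacle.
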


Proposition \ref{prop:ParticleLimit} has its analog in \cite{BoJa-11}. But now, the fact that the jump term is a finite variation process is not for free in the proof, as it is no more an increasing process.

Notice also that even if we consider constant diffusion process, the mild-equation tool that we strongly used in \cite{BoJaTa-10} and \cite{BoJa-11} is useless here, as the  controls we have on the semigroup derivative of the Lagrangian process are only for the $L^2$-norm.

\subsection{The limit as $N$ tends to $\infty$}

\begin{proof}[Proof of Proposition \ref{prop:ParticleLimit}]
The wellposedness of Equation \eqref{eq:SmoothedConfinedLagrangsystem} directly derives from Proposition \ref{prop:2014} (replacing $B$ by $B^\epsilon$) combining with Corollary \ref{coro:TraceAndMaxwellBounds}. We only have to prove the propagation of chaos result.

The verification of the Aldous's  criterion  for the tightness of the family $(\PP^{\epsilon,N}; N>0)$  is a  straightforward adaptation of Lemma 4.4 in \cite{BoJa-11}.    This  ensures  that the sequence    $\{\pi^{\epsilon,N} =\mbox{Law}(\frac{1}{N}\sum_{i=1}^N\delta_{
(X^{i,\epsilon,N},\UUU^{i,\epsilon,N},K^{i,\epsilon,N})});\,N\geq 1\}$ is tight on  $\mathcal{M}(\mathcal{E})$.

We check that all limit points of
$\{\pi^{\epsilon,N};\,N\geq 1\}$ have full measure on the
set of probability measures under which the canonical process $(x(t),u(t),k(t);\,t\in[0,T])$  satisfies  \eqref{eq:SmoothedConfinedLagrangsystem}.  We denote by  $\pi^{\epsilon,\infty}$ the limit of a converging subsequence of $\left\{\pi^{\epsilon,N};\,N\geq 1\right\}$ that we still index by $N$ for simplicity.

Following  Lemma 4.6 in \cite{BoJa-11},  it is not difficult to see that, for $\pi^{\epsilon,\infty}$-a.e. $m\in\mathcal{M}(\mathcal{E})$ with $(m(t):=m\circ (x(t),u(t))^{-1};t\in[0,T])$, the process
\begin{align}\label{eq:levy}
w_t:=\frac{1}{\sigma}\left(u(t) - u(0) -k(t) -\int_{0}^{t}B_{\epsilon}[x(s);m(s)]\,ds\right),\,t\in(0,T],
\end{align}
is a $\er^d$-Brownian motion under $m$.

The remaining point is the identification of the jump process $k$ that we detail in the following lemma.
\begin{lemma}\label{lem:JumpIdentification}
The three following properties hold true $\pi^{\epsilon,\infty}$-a.e, $m\in\mathcal{M}(\Ee)$, $m$-a.s.:
\item{(a)} For all jump times $t\in[0,T]$ of $u$, $\triangle u(t) = -2 (u(t^{-})\cdot n_{\Dd}(x(t)))n_{\Dd}(x(t))$.
\item{(b)} $(k(t);\,t\in[0,T])$ is a finite variation process, and the related measure $|k|$ defined on $[0,T]$ satisfies
\[
|k|(t)=\int_{0}^{t}\1_{\displaystyle\left\{s\geq 0; x(s)\in\partial\Dd\right\}}\,d|k|(s),~\forall~t\in[0,T].
\]
\item{(c)} The set $\{t\in[0,T]; x(t)\in\partial\Dd \}$ is at most countable.
\end{lemma}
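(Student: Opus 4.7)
The strategy is that each of (a), (b), (c) holds by construction for every trajectory of the particle system, hence $\pi^{\epsilon,N}$-a.s.\ $m$ charges only ``good'' paths; the plan is to re-express each property as a countable family of identities against continuous test functions on $\Ee$ so that ``goodness of $m$'' becomes a closed condition on $\mathcal{M}(\Ee)$, and transfer it from $\pi^{\epsilon,N}$ to $\pi^{\epsilon,\infty}$ by weak convergence. Throughout I lean on \eqref{eq:levy}: under $m$ the process $u(\cdot)-k(\cdot)-\int_0^{\cdot}B_\epsilon[x(s);m(s)]\,ds$ is Brownian, hence $\Delta u(t)=\Delta k(t)$ at every $t$.

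For (b), I would pick $\phi\in\Cc_c([0,T]\times\Dd)$ (so that $\phi$ vanishes in a neighbourhood of $[0,T]\times\partial\Dd$) and study the functional
\[
\Phi_\phi(x,u,k)=\int_0^T\phi(s,x(s))\,dk(s).
\]
By construction $dK^{i,\epsilon,N}$ is supported on $\{s: X^{i,\epsilon,N}_s\in\partial\Dd\}$, so $\Phi_\phi$ vanishes on every particle path and $\int\Phi_\phi\,d\bar\mu^{\epsilon,N}=0$ almost surely. To pass to the limit I need a uniform-in-$N$ control of $\EE\bigl[|K^{1,\epsilon,N}|(T)\bigr]$, which I would derive from the trace identity \eqref{eq:StochasticInterpretationTrace} applied with $f(t,x,u)=|u\cdot \nd(x)|$ together with the Maxwellian trace bounds \eqref{eq:TraceBounds}; this is where $\alpha>d+3$ in \hypi is used, to secure $\int |u|^3/\weight(u)\,du<+\infty$. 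On paths with uniformly bounded total variation, $\Phi_\phi$ is Skorokhod-continuous, so the identity $\Phi_\phi=0$ passes to $\pi^{\epsilon,\infty}$-a.e.\ $m$. Scanning $\phi$ through a countable dense family yields the finite-variation statement together with the support property of (b).

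For (a), my plan is to pair $\phi\in\Cc([0,T]\times\overline{\Dd})$ with $h\in\Cc_b(\er^d)$ and consider
\[
\Psi_{\phi,h}(x,u,k)=\sum_{s\leq T}\phi(s,x(s))\Bigl(h(u(s))-h\bigl(u(s^-)-2(u(s^-)\cdot \nd(x(s)))\nd(x(s))\bigr)\Bigr),
\]
after smoothly extending $\nd$ into a tubular neighborhood of $\partial\Dd$ inside $\overline{\Dd}$. The functional vanishes identically on every particle path by the specular rule, and the uniform bounded variation granted by (b) together with $\Delta u=\Delta k$ makes $\Psi_{\phi,h}$ Skorokhod-continuous on the relevant path class. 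Passing to the limit along a countable dense $(\phi,h)$ identifies $\Delta u(s)=-2(u(s^-)\cdot \nd(x(s)))\nd(x(s))$ at every jump $s$, $m$-a.s.

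For (c), I would combine the previous two parts with the trace machinery. By (a) every jump time of $u$ is a boundary time, and conversely, any boundary time $t$ with $u(t^-)\cdot \nd(x(t))\neq 0$ reflects into a strictly inward velocity, so by continuity of $x$ this $t$ is locally isolated in $\{s: x(s)\in\partial\Dd\}$; since the jumps of a càdlàg function are at most countable, all such ``transverse'' boundary times are countable. The remaining ``grazing'' hits I would rule out by applying \eqref{eq:StochasticInterpretationTrace} to a test function concentrated on $\{u\cdot \nd=0\}$ and invoking the Maxwellian bounds \eqref{eq:TraceBounds} on $\gamma^\pm(\rho^{m})$, which force the expected number of grazing hits to be zero. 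The main obstacle I foresee in executing this plan is the uniform-in-$N$ control of the total variation of $K^{1,\epsilon,N}$ and the Skorokhod-continuity of the jump-type functionals $\Phi_\phi$ and $\Psi_{\phi,h}$; it recurs throughout the argument and leans heavily on the trace bounds of Corollary \ref{coro:TraceAndMaxwellBounds} combined with the weight $\weight$ of \hypi.
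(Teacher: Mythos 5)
Your overall strategy for part \textit{(b)} is the same as the paper's: establish a uniform-in-$N$ bound on $\EE_\QQ[|K^{1,\epsilon,N}|_T]$ via the trace representation \eqref{eq:StochasticInterpretationTrace} of Corollary \ref{coro:TraceAndMaxwellBounds} and a Cauchy--Schwarz argument against the weight $\weight$ together with the Maxwellian trace bounds \eqref{eq:TraceBounds} (this is indeed exactly where $\alpha>d+3$ enters, through $\int_{\er^d}|u|^{3}/\weight(u)\,du<+\infty$), and then transfer the finite-variation and boundary-support properties to $\pi^{\epsilon,\infty}$ by a Chebyshev/exchangeability argument combined with a closedness statement in the Skorokhod topology. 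The paper packages the closedness differently: it works with the explicit sets $F_M=\{(x,u,k);\ |k|(T)\leq M,\ \int_0^T\dist(x(s),\partial\Dd)\,d|k|(s)=0\}$ and shows directly that $F_M$ is closed, whereas you test $dk$ against functions $\phi$ supported away from the boundary; the two formulations are equivalent. Parts \textit{(a)} and \textit{(c)} are simply deferred by the paper to Lemma 4.8 of \cite{BoJa-11}, and your sketches for them are in the same spirit.

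The one genuine issue is that the step you set aside as ``the main obstacle'' --- the Skorokhod continuity of $\Phi_\phi$, equivalently the closedness of $F_M$ --- is not a peripheral technicality: it is essentially the entire content of the paper's proof. The difficulty is that Skorokhod convergence $\zeta^n\to\zeta$ only gives uniform convergence of $k^n\circ\lambda_n$ to $k$ along time changes $\lambda_n$, and from this together with $\sup_n|k^n|(T)\leq M$ one must deduce that the total-variation measures $d|k^n\circ\lambda_n|$ converge weakly to $d|k|$ on $[0,T]$. This does not follow from lower semicontinuity of the total variation alone (which would only give $|k|(T)\leq\liminf_n|k^n|(T)$); the paper proves it by identifying $d|k^n\circ\lambda_n|$ with the pushforward $d(\lambda_n^{-1}\sharp|k^n|)$, extracting weakly convergent subsequences by mass boundedness, and identifying every subsequential limit with $d|k|$ through Riemann-sum approximations of $\int_0^T f(s)\,d|k|(s)$ along a subsequence converging fast in the uniform norm. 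Your argument needs this (or the analogous continuity of $\Phi_\phi$, whose integrand additionally depends on $x^n$, handled in the paper by the change of variables $\int_0^T\eta_n(s)\,d|k^n|(s)=\int_0^T\eta_n(\lambda_n(s))\,d|k^n\circ\lambda_n|(s)$); without it the transfer from $\pi^{\epsilon,N}$ to $\pi^{\epsilon,\infty}$ is not justified.
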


The properties \textit{(b)}  and \textit{(c)} above imply that $(k(t);t\in[0,T])$ is a pure jump process. In addition, since
the paths of the process $(u(t)-k(t);\,t\in[0,T])$ are continuous by \eqref{eq:levy}, the jump times and the jump length of $(k(t),t\in[0,T])$ and
$(u(t),t\in[0,T])$ are a.s. undistinguishable. Therefore \textit{(a)} ensures that
$\pi^{\epsilon,\infty}$-a.e. $m\in\mathcal{M}(\mathcal{E})$, $m$-a.s.,
$k(t)=-2\sum_{0<s\leq t}(u(s^{-})\cdot n_{\Dd}(x(s)))n_{\Dd}(x(s))\1_{\{x_{s}\in\partial\Dd\}}$ for all $t\in[0,T]$.

The uniqueness in law for the solution of  \eqref{eq:SmoothedConfinedLagrangsystem} ensures that all converging subsequences of $\{\pi^{\epsilon,N},N\geq 1\}$
tend to $\delta_{\left\{\PP^{\epsilon}\right\}}$,  and enables us to conclude on  the propagation of chaos property \eqref{eq:SmoothedPropagChaos}.
\end{proof}

\begin{proof}[Proof of Lemma~\ref{lem:JumpIdentification}]
The proof mainly follows  the proof of Lemma 4.8 in \cite{BoJa-11}.
We only need to take care about points \textit{(b)} as in the multi-dimensional case,  the jump process $k$ is no more an increasing  process.

For $\Pp_{t}=\{p=\{0\leq t_1\leq \ldots\leq t_l\leq t\};  l\geq 1\}$, the set of  all partitions of the interval $[0,t]$, the total variation process  related to $k$ is defined as
\[
|k|(t):=\sup_{p\in\Pp_t} \sum_{i=0}^{l-1}\left|k(t_{i+1})-k(t_{i})\right|.
\]
First we prove that $\pi^{\epsilon,\infty}$-a.e. $m\in\mathcal{M}(\mathcal{E})$, $m$-a.s.,  $|k|(T)<+\infty$.
We replicate some arguments of Sznitman \cite{ASznitman1984} and introduce the sets
\[
F_M:=\Big\{(x,u,k)\in\mathcal{E}; |k|(T)\leq M,\,\int_{0}^{T}\dist(x(s),\partial\Dd)\,d|k|(s) = 0\Big\}, ~G^\eta_M:=\left\{m\in\mathcal{M}(\mathcal{E}); m(F_M)\geq 1-\eta\right\}.
\]
Let us show that for all $\eta >0$, $\lim_{M\rightarrow +\infty} \pi^{\epsilon,\infty}(G^{\eta}_M) =1$. 
Since $F_M$ is a closed subset of $\Ee$ (see below), $G^\eta_M$ is closed  for the weak topology on $\mathcal{M}(\mathcal{E})$. Therefore
\[
\pi^{\epsilon,\infty}(G^\eta_M)\geq \limsup_{N\rightarrow +\infty}\pi^{\epsilon,N}(G^\eta_M)=\limsup_{N\rightarrow +\infty}
\QQ(\{\overline{\mu}^{\epsilon,N}(F_M)\geq 1-\eta\}).
\]
Denoting $F_M^c$ the complement of $F_M$ on $\mathcal{E}$, we have
\[
\QQ(\{\overline{\mu}^{\epsilon,N}(F_M)\geq 1-\eta\})=1-\QQ(\{\overline{\mu}^{\epsilon,N}(F_M^c)> \eta\}).
\]
Then applying two times the Chebyshev's inequality, and using the exchangeability of the particles,
\[
\QQ(\{\overline{\mu}^{\epsilon,N}(F_M^c)> \eta\})\leq \frac{1}{\eta}\EE_\QQ\left[\langle\1_{\{F_M^c\}},\overline{\mu}^{\epsilon,N}\rangle\right]
\leq \frac{1}{M\eta}\EE_\QQ\left[|K^{1,\epsilon,N}|_T\right].
\]
Owing to Lebesgue's monotone convergence theorem, we have
\begin{align*}
&\EE_\QQ\left[|K^{1,\epsilon,N}|_T\right]
=\sup_{\mathcal{P}_{T}} \sum_{m=0}^{l-1}\EE_\QQ\left|\sum_{t_m<s\leq t_{m+1}}-2\left(\UUU^{1,\epsilon,N}_{s^-}\cdot n_\Dd(X^{1,\epsilon,N}_s)\right)n_\Dd(X^{1,\epsilon,N}_s)
\1_{\{X^{1,\epsilon,N}_s\in\partial\Dd\}}\right|.
\end{align*}
And, by the trace representation in Corollary  \ref{coro:TraceAndMaxwellBounds},
\begin{align*}
\EE_\QQ\left[|K^{1,\epsilon,N}|_T\right]\leq 2 \sup_{\mathcal{P}_{T}} \sum_{m=0}^{l-1}\int_{t_m}^{t_{m+1}}
\int_{\partial\Dd\times\er^d}|(u\cdot n_\Dd(x))|^2\gamma^{-}({\rho^{1,\epsilon,N}})(s,x,u)d\lambda_{\Sigma_T}\\
=\int_{\Sigma^-_T}|(u\cdot n_\Dd(x))|^2\gamma^{-}(\rho^{1,\epsilon,N})(s,x,u)d\lambda_{\Sigma_T}.
\end{align*}
Since $\gamma^-(\rho^{1,\epsilon,N})$ is bounded  in $L^2(\weight;\Sigma^-_T)$  uniformly w.r.t $N$, and $\int_{\er^d} \frac{|(u\cdot n_{\Dd}(x))|^3}{\weight(u)} du \leq \int_{\er^d} \frac{|u|^3}{\weight(u)} du< +\infty$,
\begin{align*}
\int_{\Sigma^-_T}|(u\cdot n_\Dd(x))|^2& \gamma^{-}(\rho^{1,\epsilon,N})(s,x,u)d\lambda_{\Sigma_T} \\
& \leq \sqrt{\int_{\Sigma^-_T} \frac{|u|^3}{\weight(u)}\,d\lambda_{\Sigma_T}}\sqrt{\int_{\Sigma^-_T} |(u\cdot n_\Dd(x))| \weight(u) |\gamma^{-}(\rho^{1,\epsilon,N})|^2(s,x,u)d\lambda_{\Sigma_T}}< +\infty.
\end{align*}

It follows that $\limsup_{N\rightarrow +\infty}\EE_\QQ\left[|K^{1,\epsilon,N}|_T\right]<+\infty$, so that $\lim_{M\rightarrow +\infty} \pi^{\epsilon,\infty}(G^{\eta}_M) =1$. Letting $\eta$ tends to $0$, we also conclude that for $\pi^{\epsilon,\infty}$-a.e. $m$, $m(\cup_{M>0}F_M) =1$ which means that $|k|(T)<+\infty$ a.s.

We prove now that $F_M$ is closed. Let us consider a sequence  $\{\zeta^n = (x^n,u^n,k^n), n\in\nat\}$ in that subset, converging  to $\zeta=(x,u,k)$ in $\mathcal{E}$ according to the Skorokhod topology; namely (see e.g. \cite[Theorem 1.14, Chapter 6]{JaSh-02}) there exists a sequence $\{\lambda_{n}, n\in\nat\}$  of continuous increasing functions on $[0,T]$ such that  for all $n$,  $\lambda_{n}(0)=0$, $\lambda_{n}(T)=T$, $\lim_{n\rightarrow +\infty}\sup_{t\in[0,T]}|\lambda_{n}(t)-t|=0$, and
\begin{equation}\label{eq:SkorokhodTopo}
\lim_{n\rightarrow+\infty} \sup_{t\in[0,T]}|{\zeta}^{n}(\lambda_{n}(t))-{\zeta}(t)|=0,
\mbox{ ~and for all }t\in[0,T] ~\lim_{n\rightarrow+\infty}
|\Delta{\zeta}^{n}(\lambda_{n}(t))-\Delta{\zeta}(t)|=0.
\end{equation}5
As \eqref{eq:SkorokhodTopo} implies that $k^n\circ \lambda_n$ and $\triangle k^n\circ \lambda_n$ converge respectively to $k$ and $\triangle k$ uniformly in $[0,T]$, it further ensures that the sequence of measures $d|k^n\circ\lambda_n|$ converges weakly to $d|k|$ on $[0,T]$. Indeed, $\lambda_n$ being a time change, let us first remark that the measure $d|k^n\circ\lambda_n|$ coincides with the pushforward measure  $d(\lambda^{-1}_n\sharp |k^n|)$, such that for all $s,t\in [0,T]$, $(\lambda^{-1}_n\sharp |k^n|)([s,t]) = |k^n|(\lambda_n(s),\lambda_n(t))$.
Since  $\sup_n |k^n|(T)  = \sup_n |k^n\circ\ \lambda_n |(T) \leq M$, there exists a converging subsequence $\{|k^{n_\ell}\circ\lambda_{n_\ell}|,\,\ell\in\NN\}$. From this sequence,  let us further extract a subsequence
  $\{|k^{n_L}\circ\lambda_{n_L}|,\,L\in\NN\}$ such that
$\sup_{[0,T]}|k^{n_L}\circ\lambda_{n_L}-k|\leq \frac{1}{L^2}$. Then, for all continuous function $f:[0,T]\rightarrow\er$,  for the partition $0=t_0\leq t_1\leq \cdots \leq t_{L-1}\leq t_L=T$ of $[0,T]$ such that $|t_{m+1}-t_m|\leq T/L$, we have
\begin{align*}
&\left|\int_0^T f(s)d|k|(s)-\int_0^Tf(s)d|k^{n_L}\circ\lambda_{n_L}|(s)\right|\\
&\leq \left|\int_0^T f(s)d|k|(s)-\sum_{m=0}^{L-1}f(t_m)\left|k(t_{m+1})-k(t_m)\right|\right|\\
&\quad +\left|\sum_{m=0}^{L-1}f(t_m)\left|k(t_{m+1})-k(t_m)\right|-\sum_{m=0}^{L-1}f(t_m)\left|k^{n_L}(\lambda_{n_L}(t_{m+1}))-k^{n_L}(\lambda_{n_L}(t_{m}))\right|\right|\\
&\quad +\left|\sum_{m=0}^{L-1}f(t_m)\left|k^{n_L}(\lambda_{n_L}(t_{m+1}))-k^{n_L}(\lambda_{n_L}(t_{m}))\right|-\int_0^Tf(s)d|k^{n_L}\circ\lambda_{n_L}|(s)\right|.
\end{align*}
The first and third terms in the right hand side tend to $0$ as $L\rightarrow + \infty$ by continuity of $f$. Since the second term is bounded from above by $2\max_{t\in[0,T]}|f(t)|/L$, we conclude that for any converging subsequence of $|k^n\circ \lambda_n|$ we can extract a subsequence which converges to $|k|$. This implies the weak convergence of $|k^n\circ \lambda_n|$ towards $|k|$.
 Next,
 since $t\mapsto \eta_n(t) :=\dist(x^{n}(t),\partial\Dd)$ and $t\mapsto \eta(t) := \dist(x(t),\partial\Dd)$ are  continuous and $\int_{0}^{T}\eta_n (s)d|k^n|(s)=0$,
\begin{align*}
\left|\int_{0}^{T}\eta (s)d|k|(s)\right|  = & \left | \int_{0}^{T}\eta (s)d|k|(s) -  \int_{0}^{T}\eta_n (s)d|k^n|(s)\right | \\
\leq & \left | \int_{0}^{T}\eta (s)d|k|(s) -  \int_{0}^{T}\eta (s)d|k^n\circ\lambda_{n}|(s)\right |  +  \left |  \int_{0}^{T}\eta (s)d|k^n\circ\lambda_{n}|(s) -  \int_{0}^{T}\eta_n (s)d|k^n|(s) \right |.
\end{align*}
The first term tends to 0, by the weak convergence of $|k^n\circ\lambda_{n}|$ to $|k|$. For the second one, using the change of variable in the second  integral
\[ \int_{0}^{T}\eta_n (s)d|k^n|(s)  = \int_{0}^{T}\eta_n (\lambda_n(\lambda_n^{-1}(s)))d|k^n|(s)  =  \int_{0}^{T}\eta_n(\lambda_{n}(s))  d \lambda_n^{-1}\sharp|k^n| (s)  =  \int_{0}^{T}\eta_n(\lambda_{n}(s))  d|k^n \circ \lambda_{n}| (s),\]
we get
$$ \left |  \int_{0}^{T}\eta (s)d|k^n\circ\lambda_{n}|(s) -  \int_{0}^{T}\eta_n (s)d|k^n|(s) \right |  \leq \left(\sup_{s\in [0,T]} |\eta_n(\lambda_n(s)) - \eta(s)|\right) M $$
from which we conclude that $\left|\int_{0}^{T}\eta (s)d|k|(s)\right| =0$,  letting $n$ tend to infinity.
\end{proof}
\subsection{The limit as $\epsilon$ tends to $0$}

  The tightness of $\{\PP^\epsilon;\,\epsilon>0\}$ can be shown again by replicating the verification of the Aldous's criterion given in Lemma $4.4$ of \cite{BoJa-11}.

The main concern in that step is for the identification of the limit points.
With Lemma \ref{lem:verification_iii_ii}, we easily check that any limit $\PP^0$ of a converging subsequence   of $\{\PP^\epsilon;\,\epsilon>0\}$ is a weak solution to the  \eqref{eq:ConfinedLagrangsystem},  as it satisfies the following  martingale problem  conditions
\begin{description}
\item[]{(i)} $\PP^0\circ({x(0),u(0),k(0)})^{-1} = \mu_{0}\otimes \delta_0$, where
$\delta_0$ denotes the Dirac mass at 0 on $\er^d$, by hypothesis.
\item[]{(ii)} For all $t\in(0,T]$, $\PP^0\circ(x(t),u(t))^{-1}$ admits the positive
Lebesgue density $\rho(t)$.
\item[]{(iii)} For all $f\in\Cc^{2}_{b}(\er^{2d})$, the process
\end{description}
\begin{align}\label{Confined_boundedPope_Martingale_System}
\begin{aligned}
&f(x(t),u(t)-k(t))-f(x(0),u(0))-\int_{0}^{t}\left(u(s)\cdot\nabla_{x}f(x(s),u(s)-k(s))\right)ds\\
&\quad -\int_{0}^{t}\left[\left(B\left[x(s);\rho(s)\right]\cdot
\nabla_{u}f(x(s),u(s)-k(s))\right)+\frac{\sigma^2}{2}\triangle_{u}f(x(s),u(s)-k(s))\right]ds\\
&\mbox{is a continuous $\PP^0$-martingale w.r.t. the canonical filtration $\left(\mathcal{B}_{t};t\in [0,T]\right)$.}
\end{aligned}
\end{align}
Indeed, as observed in \cite{BoJa-11},  this is a direct consequence of the following convergence
\begin{equation}\label{L1Continuity}
\lim_{|h|,|\delta|\rightarrow 0}\limsup_{\epsilon\rightarrow 0^{+}}
\int_{\Dd\times\er^{d}}\left|\rho^{\epsilon}_{t}(x+h,u+\delta)-\rho^{\epsilon}_{t}(x,u)\right|dx\,du=0,~\forall~t\in(0,T],
\end{equation}
that can be immediately deduced from Lemma \ref{lem:verification_iii_ii}.
\begin{description}
\item[]{(iv)} $\PP^0$-a.s., the set $\{t\in[0,T]; {x(t)\in\partial\Dd}\}$ is at
most countable, and for all $t\in[0,T]$,
\begin{equation*}
k(t)=-2\sum_{0<s\leq t}(u(s^-)\cdot n_{\Dd}(x(s)))n_{\Dd}(x(s))\1_{\left\{x(s)\in\partial\Dd\right\}},
\end{equation*}
\noindent
since we can reproduce all the arguments of Lemma \ref{lem:JumpIdentification}., applying Corollary \ref{coro:TraceAndMaxwellBounds} again.
\end{description}
\begin{proof}[Proof of Lemma \ref{lem:verification_iii_ii}]
The existence of the time-marginal densities $(\rho^\epsilon(t);\,t\in[0,T])$ in $L^2(\weight;Q_T)$  follows immediately from Corollary \ref{coro:TraceAndMaxwellBounds}. Adapting Proposition \ref{prop:2014} to $B^\epsilon$, $\rho^{\epsilon}$ satisfies in $V_1(\weight;Q_T)$ the analog of the Fokker-Planck equation \eqref{eq:ConditionalMcKeanVlasov-Pde}, replacing $B$ by $B^\epsilon$.
Thus we observe that, for all $\epsilon>0$,
$R^{\epsilon}:=\rho^{\epsilon}-\rho$ satisfies in $V_1(\weight;Q_T)$
\begin{equation*}
\left\{
\begin{aligned}
&\partial_t R^{\epsilon}+u\cdot \nabla_x R^{\epsilon}-\frac{\sigma^2}{2}\triangle_u R^{\epsilon}=-\nabla_u\cdot \left(B_{\epsilon}[\cdot;\rho^{\epsilon}]\rho^{\epsilon}-B[\cdot;\rho]\rho\right)~\mbox{in }Q_T,\\
&R^{\epsilon}(0,x,u)=0\,\mbox{ in}\,\Dd\times\er^d,\\
&\gamma^{+}(R^{\epsilon})(t,x,u)=\gamma^-(R^{\epsilon})(t,x,u-2(u\cdot n_{\Dd}(x))n_{\Dd}(x))\,\mbox{ in}\,\Sigma^+_T.
\end{aligned}
\right.
\end{equation*}
Therefore,  applying \cite[Lemma 3.8]{BoJa-15} with $B=B_\epsilon[\cdot;\rho_\epsilon]$, $q(t,x,u)=\gamma^{-}(R^{\epsilon})(t,x,u-2(u\cdot n_{\Dd}(x))n_{\Dd}(x))$, $g=\left(B_\epsilon[\cdot;\rho^\epsilon]-B[\cdot;\rho])\cdot \nabla_u\rho\right)$,  for all $t\in(0,T]$,
\begin{align*}
&\Vert R^{\epsilon}(t)\Vert^2_{L^2(\weight;\Dd\times\er^d)}+\sigma^2\int_0^t \Vert \nabla_u R^{\epsilon}(s)\Vert^2_{L^2(\weight;\Dd\times\er^d)}\,ds\\
&=\int_{Q_t} \left(B_{\epsilon}[\cdot;\rho^{\epsilon}]\rho^{\epsilon}-B[\cdot;\rho]\rho\right)\cdot (R^{\epsilon}\nabla_u\weight+\weight\nabla_u  R^{\epsilon}) +\sigma^2\int_{Q_t}(\triangle_u\weight) (R^{\epsilon})^2.
\end{align*}
Since $|\triangle_u\weight(u)|+|\nabla_u\weight(u)|\leq C(\alpha,d)\weight(u)$, using Young Inequality,  it follows that
\begin{equation}\label{proofstep6}
\begin{aligned}
&\Vert R^{\epsilon}(t)\Vert^2_{L^2(\weight;\Dd\times\er^d)}+\frac{\sigma^2}{2}\int_0^t \Vert \nabla_u R^{\epsilon}(s)\Vert^2_{L^2(\weight;\Dd\times\er^d)}\,ds\\
&\leq \left(C(\alpha,d)\Vert b\Vert_{\infty}+\sigma^2 C(\alpha,d)+\frac{\Vert b\Vert_{\infty}}{2\sigma^2}\right)\int_0^{t}\Vert R^{\epsilon}(s)\Vert^2_{L^{2}(\weight;\Dd\times\er^d)}\,ds\\
&\quad+\left(C(\alpha,d)+\frac{1}{2\sigma^2}\right)\int_0^
t\Vert\rho(s)\left(B_{\epsilon}[\cdot;\rho^{\epsilon}(s)]-B[\cdot;\rho(s)]\right)\Vert^2_{L^{2}(\weight;\Dd\times\er^d)}\,ds.
\end{aligned}
\end{equation}
Now, observe that, according to the Maxwellian bounds in  Proposition \ref{prop:2014}, we can define finite positive constants
$$\maxMaxwell:=\sup_{\substack{(t,x)\in(0,T)\times\Dd}}\left(\int_{\er^d}\weight(v)|\rho(t,x,v)|^2\,dv\right) \mbox{ and  } \minMaxwell:=\inf_{\substack{(t,x)\in(0,T)\times\Dd}}\left(\int_{\er^d}\rho(t,x,v)\,dv\right),$$
with $\underline{m}>0$ and $\overline{M}<\infty$, from the Maxwellian bounds and \hypiii.
Then
\begin{align*}
&\int_0^t\Vert\rho(s)\left(B_{\epsilon}[\cdot;\rho^{\epsilon}(s)]-B[\cdot;\rho(s)]\right)\Vert^2_{L^{2}(\weight;\Dd\times\er^d)}\,ds
{\leq \maxMaxwell \int_{(0,t)\times\Dd}\left|B_{\epsilon}[x;\rho^{\epsilon}(s)]-B[x;\rho(s)]\right|^2\,dx\,ds}.
\end{align*}
By setting $\overline{\rho}(t,x):=\int_{\er^d}\rho(t,x,u)\,du$, $\overline{b\rho}(t,x):=\int_{\er^d}b(u)\rho(t,x,u)\,du$,
$\overline{\rho^\epsilon}(t,x):=\int_{\er^d}\rho^\epsilon(t,x,u)\,du$ and $\overline{b\rho^\epsilon}(t,x):=\int_{\er^d}b(u)\rho^\epsilon(t,x,u)\,du$
and for $*$ the convolution product, we have
\begin{align*}
&B_{\epsilon}[x;\rho^{\epsilon}(t)]-B[x;\rho(t)] =  \frac{\phi_{\epsilon} * \left(\trunc_{\epsilon} \overline{b\rho^{\epsilon}}\right)(t,x)}{\phi_{\epsilon} * (\trunc_{\epsilon}\overline{\rho^{\epsilon}})(t,x)+\epsilon} -\frac{\overline{b\rho}(t,x)}{\overline{\rho}(t,x)} \\
&=\phi_{\epsilon} * (\trunc_{\epsilon} \overline{b\rho^{\epsilon}})(t,x)
\left(\frac{\overline{\rho}(t,x)-\phi_{\epsilon} * \left(\trunc_{\epsilon} \overline{\rho^{\epsilon}}\right)(t,x)-\epsilon}{\left(\phi_{\epsilon} * \left(\trunc_{\epsilon} \overline{\rho^{\epsilon}}\right)(t,x)+\epsilon\right)\overline{\rho}(t,x)}\right)
+\frac{\phi_{\epsilon} * \trunc_{\epsilon} \overline{b\rho^{\epsilon}}(t,x)-\overline{b\rho}(t,x)}{\overline{\rho}(t,x)}\\
&=\frac{\phi_{\epsilon} * \left(\trunc_{\epsilon} \overline{b\rho^{\epsilon}}\right)(t,x)}{\phi_{\epsilon} * (\trunc_{\epsilon}\overline{\rho^{\epsilon}})(t,x)+\epsilon}
\left(\frac{\phi_{\epsilon} * \left(\trunc_{\epsilon}\overline{ \rho}-\trunc_{\epsilon} \overline{\rho^{\epsilon}}\right)(t,x)}{\overline{\rho}(t,x)}\right) +\frac{1}{\overline{\rho}(t,x)}
\left(\phi_{\epsilon} * \left(\trunc_{\epsilon}\left(\overline{b\rho^{\epsilon}}-\overline{b\rho}\right)\right)(t,x)\right)\\
&\quad +\frac{\phi_{\epsilon} * \left(\trunc_{\epsilon}\overline{ b\rho^{\epsilon}}\right)(t,x)}{\phi_{\epsilon} * (\trunc_{\epsilon}\overline{\rho^{\epsilon}})(t,x)+\epsilon}
\left(\frac{\overline{\rho}(t,x)-\phi_{\epsilon} * (\trunc_{\epsilon}\overline{\rho})(t,x)-\epsilon}{\overline{\rho}(t,x)}\right) +\frac{\phi_{\epsilon} * (\trunc_{\epsilon}\overline{b\rho})(t,x)-\overline{b\rho}(t,x)}{\overline{\rho}(t,x)},
\end{align*}
which gives
\begin{align*}
\int_{(0,t)\times\Dd}\left|B_{\epsilon}[x;\rho^{\epsilon}(s)]-B[x;\rho(s)]\right|^2\,dx ds
\leq \frac{8\Vert b\Vert^2_{\infty}}{\minMaxwell^2}\left[\int_{(0,t)\times\Dd\times\er^d}\left|\rho^\epsilon(s,x,u)-\rho(s,x,u)\right|^2{\,du}\,dx\,ds+\Tt_{\epsilon}\right],
\end{align*}
for $\Tt_{\epsilon}:=\Vert\phi_{\epsilon} * (\trunc_{\epsilon}\overline{\rho})-\overline{\rho}\Vert^2_{L^{2}((0,t)\times\Dd)}+\frac{1}{\Vert b\Vert^2_{\infty}}\Vert\phi_{\epsilon} * (\trunc_{\epsilon}\overline{b\rho})-\overline{b\rho}\Vert^2_{L^{2}((0,t)\times\Dd)} + t|\Dd| \epsilon$.
Coming back to \eqref{proofstep6} and using Gronwall's inequality, it follows that
\begin{equation*}
\Vert R^{\epsilon}(t)\Vert^2_{V_1(\weight;Q_t)}\leq C\Tt_{\epsilon},
\end{equation*}
for $C$ independent of $\epsilon$. Since $\rho$ is in $L^2(\omega;Q_T)$ and $\lim_{\epsilon\rightarrow 0}\trunc_{\epsilon}=1$ a.e. on $\Dd$, $\Tt_\epsilon$ tends to $0$ as $\epsilon$ tends to $0$. Hence
$\lim_{\epsilon\rightarrow 0}\Vert R^{\epsilon}\Vert_{V_1(\weight;Q_T)}=0$.
\end{proof}
\appendix
\section{Appendix}\label{appendix}
\subsection{Proof of Corollary \ref{coro:TraceAndMaxwellBounds}}
From the Riesz Representation Theorem,  it is sufficient to check that for all $t\in (0,T]$, there exists some constant $C>0$ such that
\begin{equation}\label{eq:weightedL2bound}
\left|  \EE_{\QQ}\left[\sqrt{\weight(\UUU_t)}\psi(X_t,\UUU_t)\right] \right| \leq C \Vert \psi\Vert_{L^2(\Dd\times\er^d)} \mbox{, for all } \psi\in L^2(\Dd\times\er^d).
\end{equation}
Without loss of generality,  let us assume that $\psi$ is nonnegative. Then 
\begin{align*}
\EE_{\QQ}[\sqrt{\weight(\UUU_t)}\psi(X_t,\UUU_t)]
&=\int_{\Dd\times\er^d}\EE_{{\QQ}}\left[\sqrt{\weight(\UUU_t)}\psi(X_t,\UUU_t)\big| (X_0,\UUU_0)=(x,u)\right]\rho_0(x,u)dxdu\\
&\leq |\Dd|\int_{\Dd\times\er^d}\EE_{{\QQ}}\left[\sqrt{\weight(\UUU_t)}\psi(X_t,\UUU_t)\big| (X_0,\UUU_0)=(x,u)\right]\frac{1}{|\Dd|}\overline{P}_0(|u|)dxdu\\
&  \quad =   \|\overline{P}_0\|_1  |\Dd| \EE_{\QQ}\left[Z_t \sqrt{\weight(v_t)}\psi(y_t,v_t)\right]
\end{align*}
where $(y,v)$ is defined as
\[
y_t=Y_0+\int_0^t v_s ds,\qquad v_t=V_0+\sigma W_t - 2\sum_{0<s\leq t}(v_{s^-}\cdot n_\Dd(y_s))n_\Dd(y_s)\1_{\{y_s\in\partial\Dd\}},
\]
with $(Y_0,V_0)$  distributed according to
$\tfrac{dx}{|\Dd|}\tfrac{\overline{P}_0(|u|)}{\|\overline{P}_0\|_1}du$, and where $Z_t=\exp(-\tfrac{1}{\sigma} \int_0^t V(s,y_s)dW_s-\tfrac{1}{2\sigma^2}\int_0^t | V(s,y_s)|^2ds)$.  The couple $(y_t,v_t)$ is then distributed according to the  density law $\dens(t,u) dx du := \tfrac{dx}{|\Dd|}(G_{\sigma}(t)* \tfrac{\overline{P}_0(|\cdot|)}{\| \overline{P}_0\|_1}) du$. Indeed, one  can easily check that $\dens$ is a solution to the Fokker-Planck equation  \eqref{eq:ConditionalMcKeanVlasov-Pde} in $L^2((0,T)\times\Dd;H^1(\er^d))\cap V_1(\weight;Q_T)$ with $B=0$. 
The combination of Propositions 4.1 and 4.2 in \cite{BoJa-15} (in the case $B=0$)  gives  $\QQ\circ (y_t,v_t)^{-1}  = \dens(t,u) dx du$.  Applying the Cauchy-Schwarz inequality, we get
\begin{align*}
&\EE_{\QQ}\left[\sqrt{\weight(\UUU_t)}\psi(X_t,\UUU_t)\right] \leq |\Dd| \exp\{
\tfrac{\Vert V\Vert^2_{\infty}T}{4\sigma^2}\}
\sqrt{\EE_{\QQ}\left[\weight(v_t)\psi^2(y_t,v_t)\right] }
\leq C \sqrt{ \int_{\Dd\times\er^d}\psi^2(x,u)\weight(u) \dens(t,u) dx\,du}.
\end{align*}
Observing that $\weight(u)\leq 2^{\alpha-1}(\weight(u-v)+\weight(v))$, for all $u,v\in\er^d$, we get
\begin{align*}
\weight(u)\dens(t,u) &\leq 2^{\alpha-1}\left(\int_{\er^d}\weight(u-v)G_\sigma(t,u-v)\overline{P}_0(|v|)dv +\int_{\er^d}\weight(v)G_\sigma(t,u-v)\overline{P}_0(|v|)dv\right)
\leq  2^{\alpha-1} C(\alpha)
\end{align*}
with  $C(\alpha) = \|\weight G_\sigma(t,\cdot)\|_{\infty}\|\overline{P}_0\|_1 +\|G_\sigma(t,\cdot)\|_{\infty}\|\weight\overline{P}_0\|_1$, that allows to conclude on \eqref{eq:weightedL2bound}.

Now we prove the probabilistic interpretation of trace integrals in \eqref{eq:StochasticInterpretationTrace}. We consider the unique solution $\rho$ in $V_1(\weight;Q_T)$ and $\gamma^{\pm}(\rho)$ in
$L^{2}(\weight;\Sigma^{\pm}_T)$, of the following weak Fokker-Planck equation starting from $\rho_0$:  for all $t\in(0,T]$, $\phi\in\Cc^\infty_c([0,t]\times\overline{\Dd}\times\er^d)$, we have
\begin{equation}\label{eq:GreenFormula}
\begin{aligned}
& \int_{\Sigma^+_t}
\left(u\cdot n_{\Dd}(x)\right)\gamma^+(\rho)(s,x,u)\phi(s,x,u)\,d\lambda_{\Sigma_T}+ \int_{\Sigma^-_{t}}
\left(u\cdot n_{\Dd}(x)\right)\gamma^-(\rho)(s,x,u)\phi(s,x,u)\,d\lambda_{\Sigma_T}\\
&{\displaystyle = -\int_{\Dd\times\er^{d}}\phi(t,x,u)\rho(t,x,u)\,dx\,du
+\int_{\Dd\times\er^{d}}\phi(0,x,u)\rho_{0}(x,u)\,dx\,du }\\
&\quad{\displaystyle +\int_{Q_{t}} \left(\partial_{s}\phi + u\cdot\nabla_x \phi+V\cdot \nabla_{u}\phi
+\frac{\sigma^2}{2}\triangle_u \phi\right)(s,x,u)\rho(s,x,u)\,ds\,dx\,du}\\
\end{aligned}
\end{equation}
and such that $\gamma^{\pm}(\rho)$ satisfy the Maxwellian bounds \eqref{eq:TraceBounds} (see \cite[Proposition 3.14]{BoJa-15}). Then it is straightforward to check that $\rho(t)$ is also the density of $(X_t,U_t)$  using the identification by mild-equation used in  \cite{BoJa-15}-Proposition 4.2.
Now  applying It{\^o}'s formula to $\EE[\phi(T,X_T,U_T)]$, combining with \eqref{eq:GreenFormula},  one has
\begin{align*}
&\EE\left[\sum_{0<s \leq t}\left(\phi(s,X_s,U_{s^-}-2(U_{s^-}\cdot\nd(X_s))\nd(X_s))
-\phi(s,X_s,U_{s^-})\right)\1_{\{X_s\in\partial\Dd\}}\right]\\
&=\int_{\Sigma^+_T}\left(u\cdot \nd(x)\right)\gamma^+(\rho)(s,x,u)\phi(s,x,u)\,d\lambda_{\Sigma_T}+\int_{\Sigma^-_T}
\left(u\cdot n_{\Dd}(x)\right)\gamma^-(\rho)(s,x,u)\phi(s,x,u)\,d\lambda_{\Sigma_T}.
\end{align*}
Using the density between $\Cc^\infty_c(\overline{\Dd}\times\er^d)$ and $\Cc_c(\overline{\Dd}\times\er^d)$ and the surjectivity of the application $\phi\in\Cc_c(\overline{\Dd}\times\er^d)\rightarrow \phi\big{|}_{\Sigma_T}in\Cc_c(\Sigma_T)$, we conclude on \eqref{eq:StochasticInterpretationTrace}.

\subsection{Proof of the $\PP(\tau_n \leq T)$'s  upper bound \eqref{eq:Correction}}\label{sec:A2}

We consider the Langevin process  on  the probability measure $\PP_{y,v}$, endowed with a Brownian motion $B$,
\[
x_t = y + \int_0^t u_s ds, \quad u_t = v + B_t,
\]
with $y\neq 0$,  and the sequence of passage times
\begin{align*}
\displaystyle \tau_{n}=\inf\{\tau_{n-1}< t \leq T; x_{t}=0\},\textrm{ for }n \geq 1,\quad \tau_{0}=0.
\end{align*}

From the expression of $\PP_{y,v}(\tau_n \in dt,|u_{\tau_n}|\in dz)$ given in Theorem 3, in Lachal \cite{Lachal1997}, we obtain that, {for any $n\geq 2$},
\begin{equation}\label{formula1}
\begin{aligned}
\PP_{y,v}(\tau_n \leq T)=\int_0^T dt\int_0^t \frac{ds}{s} \int_0^{+\infty} \frac{dz}{\pi^2}\int_0^{+\infty}du\, g(t-s,y,v;0,u)\exp
\Big(-\frac{2(z^2+u^2)}{s}\Big)\\
\times \left[\int_0^{+\infty}\frac{\gamma \sinh(\pi\gamma)}{(2\cosh(\frac{\pi\gamma}{3}))^{n-1}}K_{i\gamma}\Big(\frac{4 uz}{s}\Big{)}\,d\gamma-\int_0^{+\infty}\frac{\gamma\sinh(\pi\gamma)}{(2\cosh(\frac{\pi\gamma}{3}))^{n}}K_{i\gamma}\Big(\frac{4 uz}{s}\Big{)}d\gamma\right]
\end{aligned}
\end{equation}
where
\[
g(t-s,y,v;0,u)=\frac{2\sqrt{3}}{\pi (t-s)^2}\exp\Big(-\frac{6y^2}{(t-s)^3}-\frac{6yv}{(t-s)^2}-\frac{2(u^2+v^2)}{(t-s)}\Big)\cosh\Big(
\frac{2u}{(t-s)^2}(3y+(t-s)v)\Big)
\]
and
$K_{i\gamma}(a)=\int_0^{+\infty}\exp\left\{-a\cosh(t)\right\}\cos(\gamma t)\,dt$ is the modified Bessel function.
We then work with the expression in \eqref{formula1}, using the following tricky identity, successfully used  for similar computation in Profeta~\cite{Profeta-13}:
\begin{equation}
\label{formula2}
\sinh(\pi\gamma)=\sinh(\frac{\pi\gamma}{3})\left(4\cosh^2(\frac{\pi\gamma}{3})-1\right).
\end{equation}
Assuming now that $n\geq 5$, for the first integral in \eqref{formula1}, from \eqref{formula2}, we have the decomposition
\begin{equation}\label{formula3}
\begin{aligned}
\int_0^{+\infty}\frac{\gamma \sinh(\pi\gamma)}{(2\cosh(\frac{\pi\gamma}{3}))^{n-1}}K_{i\gamma}(\frac{4 uz}{s})\,d\gamma
&=\frac{1}{2^{n-3}}\int_0^{+\infty}\gamma K_{i\gamma}(\frac{4 uz}{s})\frac{\sinh(\frac{\pi\gamma}{3})}{\cosh(\frac{\pi\gamma}{3})^{n-3}}\,d\gamma\\
&\quad -\frac{1}{2^{n-1}}\int_0^{+\infty}\gamma K_{i\gamma}(\frac{4 uz}{s})\frac{\sinh(\frac{\pi\gamma}{3})}{\cosh(\frac{\pi\gamma}{3})^{n-1}}\,d\gamma.
\end{aligned}
\end{equation}
Furthermore, for all $k\geq 2$, $a\geq 0$, we have (see \cite{Profeta-13}, page $168$)
\begin{align*}
\int_0^{+\infty}\gamma K_{i\gamma}(a)\frac{\sinh(\frac{\pi\gamma}{3})}{\cosh(\frac{\pi\gamma}{3})^k}\,d\gamma
=\int_0^{+\infty}a \sinh(\theta)\exp\Big(-a\cosh(\theta)\Big)
\left(\int_0^{+\infty}\sin(\gamma\theta)\frac{\sinh(\frac{\pi\gamma}{3})}{\cosh(\frac{\pi\gamma}{3})^k}\,d\gamma\right)\,d\theta.
\end{align*}
Since $|\sin(\gamma\theta)\frac{\sinh(z)}{\cosh(z)}|\leq 1$,  and $\cosh \geq 1$, assuming $k\geq 2$, we have
\[
\left|\int_0^{+\infty}\sin(\gamma\theta)\frac{\sinh(\frac{\pi\gamma}{3})}{\cosh(\frac{\pi\gamma}{3})^k}\,d\gamma\right|
\leq \int_0^{+\infty}\frac{d\gamma}{\cosh(\frac{\pi\gamma}{3})} = \frac{3}{2},
\]
and since $\sinh\geq 0$ on $\er^+$,
\begin{align*}
\left|\int_0^{+\infty}\gamma K_{i\gamma}(a)\frac{\sinh(\frac{\pi\gamma}{3})}{\cosh(\frac{\pi\gamma}{3})^{k}}\,d\gamma\right|
&\leq \frac{3}{2}\int_0^{+\infty}a\sinh(\theta)\exp\big(-a\cosh(\theta)\big) d\theta
=\frac{3}{2}\exp(-a).
\end{align*}
By taking $a=\frac{4 uz}{s}\geq 0$ in the preceding expression and coming back to \eqref{formula3},  we deduce that
\[
\left| \int_0^{+\infty}\frac{\gamma\sinh(\pi\gamma)}{(2\cosh(\frac{\pi\gamma}{3}))^{n-1}}K_{i\gamma}\big(\frac{4uz}{s}\big)\,d\gamma\right|
\leq \frac{8}{2^{n-1}}\exp\big(-\frac{4uz}{s}\big).
\]
In the same way, we get
\[
\left| \int_0^{+\infty}\frac{\gamma\sinh(\pi\gamma)}{(2\cosh(\frac{\pi\gamma}{3}))^{n}}K_{i\gamma}\Big(\frac{4uz}{s}\big)\,d\gamma\right|
\leq \frac{8}{2^n}\exp\big(-\frac{4uz}{s}\big).
\]
Therefore, coming back to \eqref{formula1}, as $\int_0^{+\infty}\exp(-\tfrac{2(z^2+u^2)}{s})\exp(-\tfrac{4uz}{s}) \,dz
\leq \exp(-\tfrac{2 u^2}{s})\int_{u}^{+\infty}\exp(-\tfrac{2 z^2}{s})\,dz \leq \sqrt{s}$, we have
\[
\PP_{y,v}(\tau_n\leq T)\leq \frac{3}{2^{n-3}\pi^2}\int_0^T dt \int_0^t \frac{ds}{\sqrt{s}} \int_0^{+\infty}g(t-s,y,v;0,u)\,du.
\]
Let us now bound the integral
\begin{align*}
\int_0^{+\infty}g(t-s,y,v;0,u)\,du &=\frac{\sqrt{3}}{\pi (t-s)^2}\exp\Big(
-\frac{6y^2}{(t-s)^3}-\frac{6yv}{(t-s)^2}-\frac{2v^2}{(t-s)}\Big)\\
&\quad \times\int_0^{+\infty}\exp\Big(
-\frac{2u^2}{(t-s)}\Big)\left(\exp\Big(\frac{2u(3y+(t-s)v)}{(t-s)^2}\Big)+\exp\Big(\frac{-2u(3y+(t-s)v)}{(t-s)^2}\Big)\right)\,du\\
&\quad \leq \frac{\sqrt{3}}{\sqrt{2\pi(t-s)^3}}\exp\Big(
-\frac{6y^2}{(t-s)^3}-\frac{6yv}{(t-s)^2}-\frac{2v^2}{(t-s)}\Big)\exp\Big(-\frac{(3y+(t-s)v)^2}{2(t-s)^3}\Big)\\
&\quad\leq\frac{\sqrt{3}}{\sqrt{2\pi(t-s)^3}}\exp\Big(-\frac{3(y+(t-s)v)^2}{2(t-s)^3}\Big),
\end{align*}
so that,
\begin{align*}
\PP_{y,v}(\tau_n \leq T)
&\leq \frac{3}{2^{n-3}\pi^2}\int_0^Tdt \int_0^t\frac{ds}{\sqrt{s}} \frac{\sqrt{3}}{ \sqrt{2{\pi}(t-s)^3}}\exp\Big(-\frac{3(y+(t-s)v)^2}{2(t-s)^3}\Big)\\
&\quad = \frac{3}{2^{n-3}\pi^2}
\int_0^T\frac{ds}{\sqrt{s}}\int_0^{T-s}\frac{\sqrt{3}}{\sqrt{2\pi} \sqrt{t^3}}\exp\Big(-\frac{3(y+tv)^2}{2t^3}\Big)dt\\
&\leq \frac{3\sqrt{3}\sqrt{T}}{2^{n-4}\pi^2\sqrt{2\pi}}\int_0^{T}\frac{1}{\sqrt{t^3}}\exp\Big(-\frac{3(y+tv)^2}{2t^3}\Big)dt.
\end{align*}
Since $\beta^* = \sup \{\beta >0;  \text{supp}(\mu_0)  \subset[\beta,+\infty)\times\er \}>0$, we observe that for $t\leq\beta^*/2|v|$, $y+tv\geq \beta^*-t|v|\geq \tfrac{\beta^*}{2} >0$  and
\begin{align*}
\int_0^{T}\frac{1}{\sqrt{t^3}}\exp\Big(-\frac{3(y+tv)^2}{2t^3}\Big)dt
& \leq \int_0^{T\wedge \tfrac{\beta^*}{2|v|}}\frac{1}{\sqrt{t^3}}
\exp\Big(-\frac{3(\beta^*)^2}{8t^3}\Big)dt
+\int_{T\wedge\tfrac{\beta^*}{2|v|}}^T\frac{1}{\sqrt{t^3}}dt \\
&\leq\frac{\sqrt{2}T}{\sqrt{3}\beta^*}  +\frac{1}{2}\left(\frac{1}{\sqrt{T\wedge\tfrac{\beta^*}{2|v|}}}-\frac{1}{\sqrt{T}}\right) \leq \frac{\sqrt{2}T}{\sqrt{3}\beta^*}   +\sqrt{\frac{|v|}{2\beta^*}}.
\end{align*}
This estimate implies that, for some constant $C>0$,
\[
\PP_{y,v}(\tau_n\leq T)\leq \frac{C}{2^{n}}
\left(\frac{\sqrt{2} T}{\sqrt{3}\beta^*}   +\sqrt{\frac{|v|}{2\beta^*}}\right),
\]
and further we obtain the desired upper bound \eqref{eq:Correction}, 
\[
\PP(\tau_n\leq T)=\int \PP_{y,v}(\tau_n\leq T)\mu_0(dy,dv)\leq \frac{C(T,m_0,\beta^*)}{2^n}.
\]


\end{document}